\providecommand{\U}[1]{\protect\rule{.1in}{.1in}}
\newtheorem{theorem}{Theorem}
\theoremstyle{plain}
\newtheorem{corollary}{Corollary}
\newtheorem{lemma}{Lemma}
\newtheorem{proposition}{Proposition}
\numberwithin{equation}{section}
\begin{document}
\title[ ]{Symmetric itinerary sets}
\author{Michael F. Barnsley}
\address{Department of Mathematics\\
Australian National University\\
Canberra, ACT, Australia\\
 \\
 }
\author{Nicolae Mihalache}
\address{KTH-Royal Institute of Technology,\\
Deparment of Mathematics,\\
100 44 Stockholm, Sweden\\
 }
\email{michael.barnsley@maths.anu.edu.au,}
\urladdr{http://www.superfractals.com{}}

\begin{abstract}
We consider a one parameter family of dynamical systems $W:[0,1]\rightarrow
\lbrack0,1]$ constructed from a pair of monotone increasing diffeomorphisms
$W_{i}$, such that $W_{i}^{-1}:$ $[0,1]\rightarrow\lbrack0,1],$ $(i=0,1)$. We
characterise the set of symbolic itineraries of $W$ using an attractor
$\overline{\Omega}$ of an iterated closed relation, in the terminology of
McGehee, and prove that there is a member of the family for which
$\overline{\Omega}$ is symmetrical.

\end{abstract}
\maketitle

\section{Introduction}

Let $W_{0}:[0,a]\rightarrow\lbrack0,1]$ and $W_{1}:[1-b,1]\rightarrow
\lbrack0,1]$ be continuous and differentiable, and such that $a+b>1$,
$W_{0}(0)=W_{1}(1-b)=0,$ $W_{0}(a)=W_{1}(1)=1.$ Let the derivatives
$W_{i}^{\prime}(x)$ $(i=0,1)$ be uniformly bounded below by $d>1$.%

\begin{figure}[ptb]%
\centering
\includegraphics[
height=3.4428in,
width=3.55in
]%
{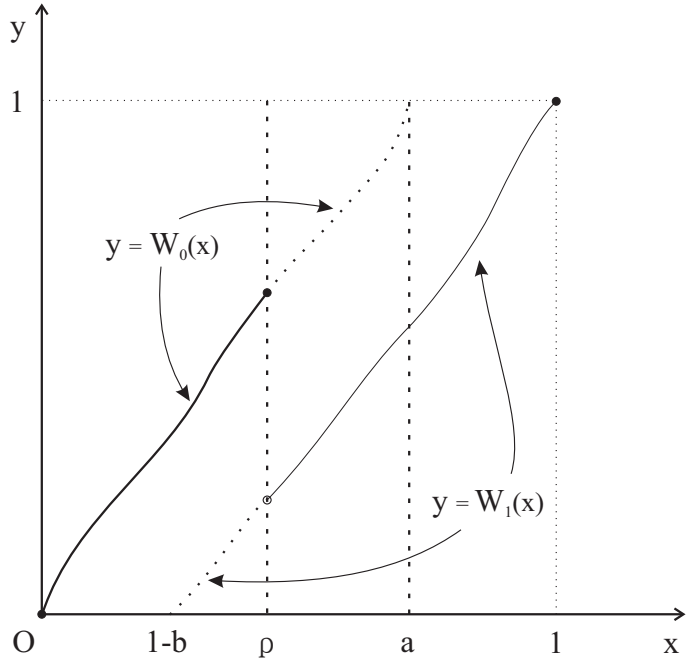}%
\caption{The piecewise continuous dynamical system W:[0,1]$\rightarrow$[0,1]
is defined in terms of two monotone strictly increasing differentiable
functions W$_{\text{0}}$(x) and W$_{\text{1}}$(x), and a real parameter $\rho
$.}%
\label{Figure1}%
\end{figure}

For $\rho\in\lbrack1-b,a]$ we define $W:[0,1]\rightarrow\lbrack0,1]$ by
\[
\lbrack0,1]\ni x\mapsto\left\{
\begin{array}
[c]{c}%
W_{0}(x)\text{ if }x\in\lbrack0,\rho]\\
W_{1}(x)\text{ otherwise}.
\end{array}
\right.
\]
See Figure \ref{Figure1}. Similarly, we define $W_{+}:[0,1]\rightarrow
\lbrack0,1]$ by replacing $[0,\rho]$ by $[0,\rho)$.

Let $I=\{0,1\}$. Let $I^{\infty}=\{0,1\}\times\{0,1\}\times\cdots$ have the
product topology induced from the discrete topology on $I$. For $\sigma\in
I^{\infty}$ write $\sigma=\sigma_{0}\sigma_{1}\sigma_{2}\ldots,$ where
$\sigma_{k}\in I$ for all $k\in\mathbb{N}$. The product topology on
$I^{\infty}$ is the same as the topology induced by the metric $d(\omega
,\sigma)=2^{-k}$ where $k$ is the least index such that $\omega_{k}\neq
\sigma_{k}$. It is well known that $(I^{\infty},d)$ is a compact metric space.
We define a total order relation $\preceq$ on $I^{\infty},$ and on $I^{n}$ for
any $n\in\mathbb{N}$, by $\sigma\prec\omega$ if $\sigma\neq\omega$ and
$\sigma_{k}<\omega_{k}$ where $k$ is the least index such that $\sigma_{k}%
\neq\omega_{k}$. For $\sigma\in I^{\infty}$ and $n\in\mathbb{N}$ we write
$\sigma|_{n}=\sigma_{0}\sigma_{1}\sigma_{2}...\sigma_{n}$. $I^{\infty}$ is the
appropriate space in which to embed and study the itineraries of the family of
discontinuous dynamical systems $W:[0,1]\rightarrow\lbrack0,1]$.

For $W_{(+)}\in\{W,W_{+}\}$ let $W_{(+)}^{k}$ denote $W_{(+)}$ composed with
itself $k$ times, for $k\in\mathbb{N}$, and let $W_{(+)}^{-k}=(W_{(+)}%
^{k})^{-1}$. We define a map $\tau:[0,1]\rightarrow I^{\infty}$, using all of
the orbits of $W,$ by
\[
\tau(x)=\sigma_{0}\sigma_{1}\sigma_{2}\ldots
\]
where $\sigma_{k}$ equals $0,$ or $1,$ according as $W^{k}(x)\in\lbrack
0,\rho],$ or $(\rho,1],$ respectively. We call $\tau(x)$ the
\textit{itinerary} of $x$ under $W$, or an \textit{address }of $x$, and we
call $\Omega=\tau([0,1])$ an \textit{address space} for $[0,1]$. Similarly, we
define $\tau^{+}:[0,1]\rightarrow I^{\infty}$ so that $\tau^{+}(x)_{k}$ equals
$0$, or $1,$ according as $W_{+}^{k}(x)\in\lbrack0,\rho),$ or $[\rho,1],$
respectively; and we define $\Omega_{+}=\tau^{+}([0,1])$. Note that $W$,
$W_{+}$, $\Omega,$ $\Omega_{+},$ $\tau$, and $\tau^{+}$ all depend on $\rho$.

The main goals of this paper are to characterise $\overline{\Omega}$ and to
show that there exists a value of $\rho$ such that $\overline{\Omega}$ is symmetric.

\begin{theorem}
\label{irtheorem} $\overline{\Omega}$ is the maximal attractor of the iterated
closed relation $r\subset I^{\infty}\times I^{\infty}$ defined by%
\[
r=\{(0\sigma,\sigma)\in I^{\infty}\times I^{\infty}:\sigma\preceq\alpha
\}\cup\{(1\sigma,\sigma)\in I^{\infty}\times I^{\infty}:\beta\preceq\sigma\}
\]
where $\alpha=\tau(W_{0}(\rho))$ and $\beta=\tau^{+}(W_{1}(\rho))$. The chain
recurrent set of $r$ is $\{\overline{0},\overline{1}\}\cup\{\sigma\in
\overline{\Omega}:\beta\preceq\sigma\preceq\alpha\}$ where $\{\sigma
\in\overline{\Omega}:\beta\preceq\sigma\preceq\alpha\}$ is a transitive
attractor for $r$, with basin of attraction $\overline{\Omega}\backslash
\{\overline{0},\overline{1}\}$ and dual repeller $\left(  I^{\infty}%
\backslash\overline{\Omega}\right)  \cup\{\overline{0},\overline{1}\}$.
\end{theorem}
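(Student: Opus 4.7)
The plan is to proceed in four stages.

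First, I would establish the monotonicity of $\tau$ and $\tau^{+}$. Since $W_{0}$ and $W_{1}$ are strictly increasing and $W$ agrees with one or the other according as $x\le\rho$ or $x>\rho$, induction on the first position at which two itineraries disagree shows that both maps are non-decreasing. Combined with the definitions $\alpha=\tau(W_{0}(\rho))$ and $\beta=\tau^{+}(W_{1}(\rho))$, this yields a Markov-type characterisation: a sequence $\sigma$ belongs to $\Omega$ if and only if, for every $k$, the tail $\sigma_{k+1}\sigma_{k+2}\ldots$ satisfies $\sigma_{k+1}\sigma_{k+2}\ldots\preceq\alpha$ whenever $\sigma_{k}=0$ and $\beta\preceq\sigma_{k+1}\sigma_{k+2}\ldots$ whenever $\sigma_{k}=1$. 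Both inequalities are closed, so passing to $\overline{\Omega}$ gives the analogous characterisation. For the converse, that every sequence satisfying these constraints is actually an itinerary, I would build the corresponding point as the unique element of a nested intersection of closed intervals obtained from the inverse branches $W_{0}^{-1}$, $W_{1}^{-1}$, whose diameters shrink to zero by the uniform expansion bound $d>1$.

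Second, I would recast this as the maximal attractor statement. Interpreting $r$ as the multifunction $\omega\mapsto\{\sigma:(\omega,\sigma)\in r\}$, the relation $r$ acts like the shift map restricted to those initial symbols that are consistent with the tail obeying the Markov condition above. The maximal attractor of $r$ in McGehee's sense is $\bigcap_{n\ge 0}r^{n}(I^{\infty})$, and by the characterisation just established this intersection is precisely $\overline{\Omega}$.

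Third, for the chain-recurrent structure: the points $\overline{0}$ and $\overline{1}$ are fixed points of $r$ (since $\overline{0}\preceq\alpha$ and $\beta\preceq\overline{1}$) and hence trivially chain recurrent. Write $M=\{\sigma\in\overline{\Omega}:\beta\preceq\sigma\preceq\alpha\}$. Transitivity of $M$ follows because, for any $\sigma,\omega\in M$, the bounds $\beta\preceq\omega\preceq\alpha$ are exactly what legitimises a finite $r$-chain followed by a single small jump from a far-out tail of $\sigma$ to a sequence arbitrarily close to $\omega$. To see that $M$ is an attractor with basin $\overline{\Omega}\setminus\{\overline{0},\overline{1}\}$, I would check that any $\sigma\in\overline{\Omega}$ different from $\overline{0}$ and $\overline{1}$ must contain both symbols infinitely often in its tail (otherwise its tail would force it into the fixed-point basins), so the Markov constraints drive some shift of $\sigma$ into $M$ and keep it there. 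The dual repeller is then identified by formal attractor-repeller duality in McGehee's framework, using that $\overline{\Omega}\setminus\{\overline{0},\overline{1}\}$ is open in $\overline{\Omega}$.

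The main obstacle, I expect, is the careful treatment of the asymmetry between $\tau$ (defined using $[0,\rho]$) and $\tau^{+}$ (defined using $[0,\rho)$), which is the technical price paid for the discontinuity of $W$ at $\rho$. Verifying that $\overline{\Omega}$, rather than a proper subset, is recovered from the Markov conditions requires that the left- and right-limit itineraries match up correctly at $\rho$, and in particular that both $\alpha$ and $\beta$ themselves lie in $\overline{\Omega}$. A related subtlety appears in pinning down exactly which boundary sequences lie in the basin versus in the dual repeller, where the strict-versus-nonstrict form of $\sigma\preceq\alpha$ and $\beta\preceq\sigma$ becomes decisive.
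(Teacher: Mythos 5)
Your first stage is essentially the paper's own route: the Markov-type characterisation of $\overline{\Omega}$ by the tail conditions ($\sigma_{k}=0\Rightarrow S^{k+1}(\sigma)\preceq\alpha$ and $\sigma_{k}=1\Rightarrow\beta\preceq S^{k+1}(\sigma)$) is exactly Lemma \ref{lemma1}, equivalently Proposition \ref{omegaprop}, and the paper likewise proves the hard inclusion by exploiting the inverse branches and the expansion bound $d>1$ (via the one-sided closedness of $\Omega$ and $\Omega_{+}$ and the pair $\omega_{-},\omega_{+}$). Since the paper then simply declares the theorem to follow from that proposition and McGehee's definitions, your level of detail on the Conley-theoretic bookkeeping in stages three and four is comparable to the paper's, and your closing remarks correctly locate the technical work in the $\tau$ versus $\tau^{+}$ asymmetry at preimages of $\rho$.

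The genuine gap is in your second stage. You interpret $r$ as the multifunction $\omega\mapsto\{\sigma:(\omega,\sigma)\in r\}$, i.e.\ as a restricted shift, and then assert that $\bigcap_{n\geq0}r^{n}(I^{\infty})=\overline{\Omega}$. Under that reading the assertion is false: since $\beta_{0}=0$ and $\alpha_{0}=1$ we have $\beta\prec\alpha$, hence $[\overline{0},\alpha]\cup[\beta,\overline{1}]=I^{\infty}$, so every point of $I^{\infty}$ admits at least one admissible predecessor, $r^{n}(I^{\infty})=I^{\infty}$ for every $n$, and the maximal attractor of the shift-direction relation is all of $I^{\infty}$. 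The intersection that actually produces $\overline{\Omega}$ is the one for the \emph{transpose} relation, whose set action is $\Lambda\mapsto s_{0}(\Lambda\cap[\overline{0},\alpha])\cup s_{1}(\Lambda\cap[\beta,\overline{1}])$ --- the map $\overline{\Psi}$ of Proposition \ref{omegaprop}(iii) --- which prepends symbols and whose first image already omits the nonempty gap $(\tau(\rho),\tau^{+}(\rho))$; iterating enforces one further Markov condition per step and the diameters of the pieces halve, which is what drives the intersection down to $\overline{\Omega}$. So you must either transpose the relation or redo the maximal-attractor computation; as written the step fails. Note also that reversing the direction exchanges attractors and repellers (for the prepending relation $\overline{0}$ and $\overline{1}$ are attracting fixed points, whereas the description of $\{\sigma\in\overline{\Omega}:\beta\preceq\sigma\preceq\alpha\}$ as a transitive attractor with basin $\overline{\Omega}\setminus\{\overline{0},\overline{1}\}$ belongs to the shift direction), and while the chain-recurrent set is the same for $r$ and $r^{\ast}$, your sketch currently mixes the two directions, so the attractor/basin/dual-repeller identification must be carried out consistently for one of them.
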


We write $\overline{E}$ to denote the closure of a set $E$. But we write
$\overline{0}=000...,\overline{1}=111...\in I^{\infty}$. For $\sigma
=\sigma_{0}\sigma_{1}\sigma_{2}\ldots\in I^{\infty}$ we write $0\sigma$ to
mean $0\sigma_{0}\sigma_{1}\sigma_{2}\ldots\in I^{\infty}$ and $1\sigma
=0\sigma_{0}\sigma_{1}\sigma_{2}\ldots\in I^{\infty}$.

Define a symmetry function $^{\ast}:I^{\infty}\rightarrow I^{\infty}$ by
$\sigma^{\ast}=\omega$ where $\omega_{k}=1-\sigma_{k}$ for all $k$.

\begin{theorem}
\label{symmetrythm} There exists a unique $\rho\in\lbrack1-b,a]$ such that
$\overline{\Omega}^{\ast}=\overline{\Omega}$.
\end{theorem}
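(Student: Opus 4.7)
The plan is to translate the symmetry of $\overline{\Omega}$ into a single symbolic equation in the parameters $\alpha(\rho) := \tau(W_0(\rho))$ and $\beta(\rho) := \tau^+(W_1(\rho))$ of Theorem~\ref{irtheorem}, and then to locate the correct $\rho$ by a monotone intermediate value argument on $[1-b,a]$. A direct computation using the definition of $r$ shows that applying $\ast$ coordinate-wise turns $r$ into a closed relation of the same form but with $(\alpha,\beta)$ replaced by $(\beta^\ast,\alpha^\ast)$. Since the maximal attractor of the $\ast$-image of $r$ is $\overline{\Omega}^\ast$, we deduce that $\overline{\Omega}^\ast=\overline{\Omega}$ if and only if $\beta=\alpha^\ast$, equivalently $\tau^+(W_1(\rho))=\tau(W_0(\rho))^\ast$.

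For existence I would exploit the boundary behavior of $\alpha$ and $\beta$. At $\rho = 1-b$ the assumption $W_1(1-b)=0$ forces $\beta(1-b)=\overline{0}$, so $\beta(1-b)^\ast = \overline{1}\succeq \alpha(1-b)$; symmetrically at $\rho=a$ the equality $W_0(a)=1$ gives $\alpha(a)=\overline{1}\succeq \beta(a)^\ast$. Embedding $I^\infty$ in $[0,1]$ via the order-preserving binary map $\phi(\sigma)=\sum_{k\ge 0}\sigma_k 2^{-k-1}$, which satisfies $\phi(\sigma)+\phi(\sigma^\ast)=1$, set
\[
\Psi(\rho)=\phi(\alpha(\rho))+\phi(\beta(\rho))-1.
\]
Then $\Psi(1-b)\le 0\le \Psi(a)$. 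The crux of the existence argument is to show that $\Psi$ is monotone non-decreasing in $\rho$: the values $W_0(\rho)$ and $W_1(\rho)$ increase strictly with $\rho$; for each fixed $\rho$ both $\tau$ and $\tau^+$ are $\preceq$-monotone on $[0,1]$ because $W_0,W_1$ are monotone increasing; and the resymbolization triggered when an orbit point crosses the moving threshold $\rho$ preserves the order thanks to the uniform expansion $W_i'\ge d>1$. Once monotonicity is in hand, $\Psi$ has only jump discontinuities, and strict monotonicity on the crossing subinterval yields uniqueness of the parameter where $\Psi=0$.

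The main obstacle is precisely this discontinuity of $\alpha(\rho)$ and $\beta(\rho)$ in the product topology: as $\rho$ passes through a value for which $W^k(W_0(\rho))$ or $W_+^k(W_1(\rho))$ lands on the moving discontinuity of $W$, arbitrarily deep symbols can flip in one step. A secondary difficulty is ruling out the dyadic ambiguity in which $\Psi(\rho)=0$ while $\alpha(\rho)=s0\overline{1}$ and $\beta(\rho)^\ast=s1\overline{0}$, so that $\alpha(\rho)\ne\beta(\rho)^\ast$. Here the expansion $W_i'\ge d>1$ is decisive: eventually-constant itineraries $s0\overline{1}$ and $s1\overline{0}$ can occur only along the countable backward orbits of the repelling fixed points $0$ of $W_0$ and $1$ of $W_1$, a discrete set of parameters that either produces a genuine symmetric solution at the sign-change $\rho$ or can be bypassed by refining $\phi$ to separate the two binary representatives. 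This is the step I expect to require the most care.
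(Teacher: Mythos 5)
Your reduction of the symmetry of $\overline{\Omega}$ to the single symbolic equation $\beta=\alpha^{\ast}$ (equivalently $\tau(\rho)=(\tau^{+}(\rho))^{\ast}$) is exactly the paper's Corollary \ref{corollary2}, and your uniqueness argument via strict monotonicity of $\rho\mapsto\tau(\rho)$ and $\rho\mapsto\tau^{+}(\rho)$ is the paper's Lemma \ref{lemma3} together with Corollary \ref{corollary5}. The genuine gap is in existence. You set $\Psi(\rho)=\phi(\alpha(\rho))+\phi(\beta(\rho))-1$, check the signs at the endpoints, and then assert that monotonicity ``yields'' a zero. But a monotone function with jump discontinuities can pass over the value $0$ without attaining it, and that is precisely the situation here: $\phi(\alpha(\rho))$ is only left continuous and $\phi(\beta(\rho))$ only right continuous in $\rho$ (Corollary \ref{corollary4}), so at the crossing parameter $\rho_{0}=\sup\{\rho:\tau(\rho)\preceq\tau^{+}(\rho)^{\ast}\}$ one may a priori have $\tau(\rho_{0})\prec\tau^{+}(\rho_{0})^{\ast}$ strictly, with the reverse strict inequality for every $\rho>\rho_{0}$. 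You correctly name this obstacle (``arbitrarily deep symbols can flip in one step'') but you do not overcome it; ``bypassed by refining $\phi$ to separate the two binary representatives'' is not an argument, and the dyadic ambiguity you flag as the delicate point is in fact the lesser of the two problems. Ruling out the jump is not a technicality --- it is essentially the entire content of the paper's proof of Theorem \ref{symmetrythm}.

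The paper's resolution of this point is dynamical rather than order-theoretic. Assuming $\tau(\rho_{0})\prec\tau^{+}(\rho_{0})^{\ast}$, it first shows via the local constancy property (9) that one of $W_{0}(\rho_{0})$, $W_{1}(\rho_{0})$ must be a finite preimage of $\rho_{0}$, since otherwise the relevant prefixes are locally constant in $\rho$ and $\rho_{0}$ could not be the supremum; it then uses the projection $\hat{\pi}$ and the semiconjugacy $\hat{\pi}\circ S=W\circ\hat{\pi}$ of property (14) to force $\hat{\pi}(\tau^{+}(W_{1}(\rho_{0}))^{\ast})=W_{0}(\rho_{0})$; and finally a periodicity argument ($\tau(\rho_{0})=\kappa^{\infty}$ with $\kappa=\tau^{+}(\rho_{0})^{\ast}|_{m+1}$, leading after iteration to $\tau^{+}(\rho_{0})^{\ast}\prec\tau(\rho_{0})$) produces the contradiction. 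Some argument of this kind --- exploiting the expansion $W_{i}^{\prime}\geq d>1$ to show that the order relation between $\tau(\rho)$ and $\tau^{+}(\rho)^{\ast}$ cannot strictly reverse across a single parameter value --- is indispensable, and your proposal does not supply it.
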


Theorem \ref{irtheorem} tells us that $\overline{\Omega}$ is fixed by
itineraries of two inverse images of the critical point $\rho$, and provides
the basis for a stable algorithm to determine $\overline{\Omega}$. It relates
the address spaces of dynamical systems of the form of $W$ to the beautiful
theory of iterated closed relations on compact Hausdorff spaces \cite{mcgehee}%
, and hence to the work of Charles Conley.

Theorem \ref{symmetrythm} is interesting in its own right and also because it
has applications in digital imaging, as explained and demonstrated, in the
special case of affine maps, in \cite{BHI}. It enables the construction of
parameterized families of nondifferentiable homeomorphisms on $[0,1]$, using
pairs of overlapping iterated function systems, see Proposition
\ref{proposition7}. Theorem \ref{symmetrythm} generalizes results in
\cite{BHI} to nonlinear $W_{i}$'s. The proof uses symbolic dynamics in place
of the geometrical construction outlined in \cite{BHI}. The approach and
results open up the mathematics underlying \cite{BHI} and \cite{igudesman}.

To tie the present work into \cite{BHI}, note that $\tau$ is a section, as
defined in \cite{BHI}, for the hyperbolic iterated function system
\[
\mathcal{F}:=([0,1];W_{0}^{-1},W_{1}^{-1})\text{.}%
\]
Our observations interrelate to, but are more specialized than, the work of
Parry \cite{parry}. Our point of view is topological rather than
measure-theoretic, and our main results appear to be new.

\section{\label{basicsec}Basic properties of $\tau$}

The following list of properties is relatively easy to check. Below the list
we elaborate on points 1, 2, and 3.

\begin{enumerate}
\item $W^{n}$ is piecewise differentiable and its derivative is uniformly
bounded below by $d^{n}$; each, except the leftmost branch of $W^{n}$, is
defined on an interval of the form $(r,s].$ $W_{+}^{n}$ is piecewise
differentiable and its derivative is uniformly bounded below by $d^{n}$; each,
except for the rightmost branch of $W_{+}^{n},$ is defined on an interval of
the form $[r,s).$

\item If $(r,s)$ is the interior of the definition domain of a branch of
$W^{n}$ (and of $W_{+}^{n})$ then $\tau(x)|_{n}$ is constant on $(r,s]$,
$\tau^{+}(x)|_{n}$ is constant on $[r,s)$, and $\tau(x)|_{n}=\tau^{+}(x)|_{n}$
for all $x\in(r,s)$.

\item The boundary of the definition domain of a branch of $W^{n}$ is
contained in $\{0,1\}\cup\bigcup\limits_{k=0}^{n-1}W^{-k}(\rho)$; by (1), the
length of such a domain is at most $d^{-n}$.

\item The set $\bigcup\limits_{k\in\mathbb{N}}W^{-k}(\rho)$ is dense in
$[0,1].$ This follows from (3).

\item $\tau(x)=\tau^{+}(x)$ unless $x\in\bigcup\limits_{k\in\mathbb{N}}%
W^{-k}(\rho)$ in which case $\tau(x)\prec\tau^{+}(x)$.

\item Both $\tau(x)$ and $\tau^{+}(x)$ are strictly increasing functions of
$x\in\lbrack0,1]$ and $\tau(x)\preceq\tau^{+}(x)$. This follows from (4) and (5).

\item For all $x\in\lbrack0,1],$ $\tau(x)$ is continuous from the left,
$\tau^{+}(x)$ is continuous from the right. Moreover, for all $x\in(0,1),$%
\[
\tau(x)=\lim_{\varepsilon\rightarrow0+}\tau^{+}(x-\varepsilon)\text{ and }%
\tau^{+}(x)=\lim_{\varepsilon\rightarrow0+}\tau(x+\varepsilon).
\]
These assertions follow from (2), (3) and (4).

\item Each $x\in W^{-n}(\rho),$ such that $\tau(x)|_{n}$ is constant, moves
continuously with respect to $\rho$ with positive velocity bounded above by
$d^{-n}.$ This follows from (1).

\item For $x\in(0,1)\backslash\bigcup\limits_{k=0}^{n}W^{-k}(\rho),$
$\tau(x)|_{n}=\tau^{+}(x)|_{n}$ is locally constant with respect to $\rho;$
moreover, this holds if $x$ depends continuously on $\rho$. This follows from
(2), (3) and (6).

\item The symmetry function $^{\ast}:I^{\infty}\rightarrow I^{\infty}$ is
strictly decreasing and continuous.

\item For any $\sigma|_{n}\in I^{n},$ $n\in\mathbb{N}$, the set
\[
\mathcal{I}(\sigma|_{n}):=\{x\in\lbrack0,1]:\tau(x)|_{n}=\sigma|_{n}\text{ or
}\tau^{+}(x)|_{n}=\sigma|_{n}\},
\]
is either empty or a non-degenerate compact interval of length at most
$d^{-n}.$ This follows from (2), (3) and (6).

\item The projection $\hat{\pi}:I^{\infty}\rightarrow\lbrack0,1]$ is
well-defined by%
\[
\hat{\pi}(\sigma)=\sup\{x\in\lbrack0,1]:\tau^{+}(x)\preceq\sigma\}=\inf
\{x\in\lbrack0,1]:\tau(x)\succeq\sigma\}\text{.}%
\]
This follows from (6).

\item The projection $\hat{\pi}:I^{\infty}\rightarrow\lbrack0,1]$ is
increasing, by (6); continuous, by (11); and, by (7),%
\begin{align*}
\hat{\pi}(\tau(x))  &  =\hat{\pi}(\tau^{+}(x))=x\text{ for all }x\in
\lbrack0,1],\\
\tau(\hat{\pi}(\sigma))  &  \preceq\sigma\preceq\tau^{+}(\hat{\pi}%
(\sigma))\text{ for all }\sigma\in I^{\infty}.
\end{align*}

\item Let $S:I^{\infty}\rightarrow I^{\infty}$ denote the left-shift map
$\sigma_{0}\sigma_{1}\sigma_{2}...\mapsto\sigma_{1}\sigma_{2}\sigma_{3}...$.
For all $\sigma\in I^{\infty}$ such that $\sigma\preceq\tau(\rho)$ or
$\sigma\geq\tau^{+}(\rho),$%
\[
\hat{\pi}(S(\sigma))=W(\hat{\pi}(\sigma))\text{.}%
\]
Also $\hat{\pi}(\tau^{+}(\rho))=\rho$ and $\hat{\pi}(S(\tau^{+}(\rho
)))=W_{1}(\rho)$. These statements follow from (7).
\end{enumerate}

Here we elaborate on points (1), (2) and (3). Consider the piecewise
continuous function $W^{k}(x)$, for $k\in\{1,2,...\}$. Its discontinuities are
at $\rho$ and, for $k>1$, other points in $(0,1)$, each of which can be
written in the form $W_{\sigma_{0}}^{-1}\circ W_{\sigma_{2}}^{-1}%
\circ...W_{\sigma_{l-1}}^{-1}(\rho)$ for some $\sigma_{0}\sigma_{1}%
...\sigma_{l-1}\in\{0,1\}^{l}$ for some $l\in\{1,2,...k-1\}$. We denote these
discontinuities, together with the points $0$ and $1$, by
\[
D_{k,0}:=0<D_{k,1}<D_{k,2}<....<D_{k,D(k)-1}<1=:D_{k,D(k)},
\]
where $D(1)=3,D(2)=5<D(3)<D(4)...$ . For each $k\geq1,$ one of the $D_{k,j}$'s
is equal to $\rho.$ For $k\geq1$ we have $W^{k}(x)=W_{0}^{k}(x)$ for
$x\in\lbrack D_{k,0},D_{k,1}]$ and $W_{+}^{k}(x)=W_{0}^{k}(x)$ for
$x\in\lbrack D_{k,0},D_{k,1})$. Similarly $W^{k}(x)=W_{1}^{k}(x)$ for all
$x\in(D_{k,D(k)-1},D_{k,D(k)}]$ and $W_{+}^{k}(x)=W_{1}^{k}(x)$ for
$x\in\lbrack D_{D(k)-1},D_{D(k)}]$.

For all $x\in(D_{k,l},D_{k,l+1})$ $(l=0,1,...,D(k)-1)$, $W^{k}(x)=W_{+}^{k}$
$(x)=W_{\theta_{k}}\circ W_{\theta_{k-1}}\circ...W_{\theta_{1}}(x)$ for some
fixed $\theta_{1}\theta_{2}...\theta_{k}\in\{0,1\}^{k}$. We refer to
$\theta_{1}\theta_{2}...\theta_{k}$ as the \textit{address of the interval}
$(D_{k,l},D_{k,l+1}),$ we say $(D_{k,l},D_{k,l+1})$ that "\textit{has address}
$\theta_{1}\theta_{2}...\theta_{k}$", and we write, by slight abuse of
notation, $\tau((D_{k,l},D_{k,l+1}))=\theta_{1}\theta_{2}...\theta_{k}.$

Let $k>1.$ Consider two adjacent intervals,$(D_{k,m-1},D_{k,m}]$ and
$(D_{k,m},D_{k,m+1}]$ for $m\in\{1,2,...,D(k)-1\}$ and $k>1$. Let the one on
the right have address $\theta_{0}\theta_{1}...\theta_{k-1}$ and the one on
the left have address $\eta_{0}\eta_{1}...$ $\eta_{k-1}$. Then $\eta_{0}%
\eta_{1}...$ $\eta_{k-1}\prec\theta_{0}\theta_{1}...\theta_{k-1}$ and we have%
\begin{align*}
\tau(x)|_{k-1}  &  =\eta_{0}\eta_{1}...\eta_{k-1}\text{ for all }%
x\in(D_{k,m-1},D_{k,m}]\text{,}\\
\tau^{+}(x)|_{k-1}  &  =\eta_{0}\eta_{1}...\eta_{k-1}\text{ for all }%
x\in\lbrack D_{k,m-1},D_{k,m}),\\
\tau(x)|_{k-1}  &  =\theta_{0}\theta_{1}...\theta_{k-1}\text{ for all }%
x\in(D_{k,m},D_{k,m+1}],\\
\tau^{+}(x)|_{k-1}  &  =\theta_{0}\theta_{1}...\theta_{k-1}\text{ for all
}x\in\lbrack D_{k,m},D_{k,m+1}).
\end{align*}
In particular, $\tau(x)|_{k-1}$ and $\tau^{+}(x)|_{k-1}$ are constant and
equal on each of the open intervals $(D_{k,m-1},D_{k,m})$ and have distinct
values at the discontinuity points $\{D_{k,m}\}_{m=1}^{D(k)-1}$.

\section{\label{structuresec}The structures of $\Omega$, $\Omega_{+}$ and
$\overline{\Omega}$.}

In this section we characterize $\Omega$ and $\Omega_{+}$ as certain inverse
limits, and we characterize $\overline{\Omega}$ as an attractor of an iterated
closed relation on $I^{\infty}$. These inverse limits are natural and they
clarify the structures of $\Omega$ and $\Omega_{+}.$ They are implied by the
shift invariance of $\Omega$ and $\Omega_{+}$. Recall that $S:I^{\infty
}\rightarrow I^{\infty}$ denotes the left-shift map $\sigma_{0}\sigma
_{1}\sigma_{2}...\mapsto\sigma_{1}\sigma_{2}\sigma_{3}...$.

\begin{proposition}
\label{shiftprop}(i) $\tau(W(x))=S(\tau(x))$ and $\tau^{+}(W_{+}%
(x))=S(\tau^{+}(x))$ for all $x\in\lbrack0,1]$.

(ii) $S(\Omega)=\Omega$ and $S(\Omega_{+})=\Omega_{+}.$
\end{proposition}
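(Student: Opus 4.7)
The plan is to handle (i) by a direct unwinding of the definitions of $\tau$ and $\tau^+$, and then to deduce (ii) from (i) together with a surjectivity observation about $W$ and $W_+$. For (i), recall that $\tau(y)_k$ equals $0$ or $1$ according as $W^k(y)\in[0,\rho]$ or $W^k(y)\in(\rho,1]$. Applying this with $y=W(x)$, and using $W^k(W(x))=W^{k+1}(x)$, the condition defining $\tau(W(x))_k$ is identical to the one defining $\tau(x)_{k+1}=S(\tau(x))_k$. Hence $\tau(W(x))=S(\tau(x))$ coordinate by coordinate, for every $x\in[0,1]$. The same computation, with the half-open convention $[0,\rho)$ versus $[\rho,1]$ and $W_+$ in place of $W$, establishes the companion identity $\tau^+(W_+(x))=S(\tau^+(x))$.

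For part (ii), (i) together with $\Omega=\tau([0,1])$ immediately gives
\[
S(\Omega)=S(\tau([0,1]))=\tau(W([0,1]))\subseteq\tau([0,1])=\Omega,
\]
and analogously $S(\Omega_+)\subseteq\Omega_+$. For the reverse inclusions it suffices to verify that $W$ and $W_+$ each map $[0,1]$ onto $[0,1]$; if so, then $\Omega=\tau(W([0,1]))=S(\tau([0,1]))=S(\Omega)$ and similarly $\Omega_+=S(\Omega_+)$.

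Surjectivity follows from the expansion hypothesis $W_i'\geq d>1$: since $W_0(0)=0$ we obtain $W_0(\rho)\geq d\rho$, and since $W_1(1)=1$ we obtain $W_1(\rho)\leq 1-d(1-\rho)$; subtracting yields $W_0(\rho)-W_1(\rho)\geq d-1>0$. Hence $W([0,1])=[0,W_0(\rho)]\cup(W_1(\rho),1]=[0,1]$, and the same argument applies to $W_+$. I do not anticipate a serious obstacle; the only substantive point is this surjectivity estimate, which uses the expansion hypothesis to promote the easy inclusion $S(\Omega)\subseteq\Omega$ to an equality.
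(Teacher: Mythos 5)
Your proof is correct and follows essentially the same route as the paper: part (i) by unwinding the definitions of $\tau$ and $\tau^{+}$, and part (ii) from (i) together with $W([0,1])=W_{+}([0,1])=[0,1]$. The only difference is that you actually justify the surjectivity of $W$ and $W_{+}$ via the bound $W_{0}(\rho)-W_{1}(\rho)\geq d-1>0$, which the paper simply asserts; that estimate is valid and a worthwhile addition.
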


\begin{proof}
(i) This follows at once from the definitions of $\tau$ and $\tau^{+}.$ (ii)
This follows from (i) together with $W([0,1])=W_{+}([0,1])=[0,1].$
\end{proof}

We say that $\Lambda\subset I^{\infty}$ is \textit{closed from the left} if,
whenever $\{x_{n}\}_{n=0}^{\infty}$ is an non-decreasing sequence of points in
$\Lambda$, $\lim x_{n}\in\Lambda$. We say that $\Lambda\subset I^{\infty}$ is
\textit{closed from the right} if, whenever $\{x_{n}\}_{n=0}^{\infty}$ is
non-increasing sequence in $\Lambda$, $\lim x_{n}\in\Lambda$. For $S\subset
X,$ where $X=I^{\infty}$ or $[0,1],$ we write $L(S)=\{\sigma\in X:$
\textit{there is} \textit{a non-decreasing sequence} $\{z_{n}\}_{n=0}^{\infty
}\subset S$ \textit{with} $\sigma=\lim z_{n}\}\ $to denote the closure of $S$
from the left. Analogously, we define $R(S)$ for the closure of $S$ from the right.

\begin{proposition}
\label{closedlemma} (i) $\Omega$ is closed from the left and $\Omega_{+}$ is
closed from the right;

(ii) $\overline{\Omega}=\overline{\Omega_{+}}=\Omega\cup\Omega_{+}%
=\overline{\Omega\cap\Omega_{+}}$
\end{proposition}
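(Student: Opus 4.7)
The plan is to obtain part (i) directly from the listed properties of $\tau$ and $\tau^{+}$, and then to derive the four equalities in part (ii) from (i) together with a monotone-subsequence argument. For (i), if $\{\tau(y_n)\}$ is a non-decreasing sequence in $\Omega$ converging to $\sigma$, strict monotonicity of $\tau$ (property 6) forces $y_n \le y_{n+1}$, so $y_n \nearrow y$ for some $y \in [0,1]$; left-continuity of $\tau$ (property 7) then gives $\sigma = \lim \tau(y_n) = \tau(y) \in \Omega$. The symmetric argument, using right-continuity of $\tau^{+}$, handles $\Omega_{+}$.

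For (ii) I would first prove $\Omega \cup \Omega_{+} \subset \overline{\Omega} \cap \overline{\Omega_{+}}$: by property 7, for $x \in (0,1)$ one has $\tau^{+}(x) = \lim_{\varepsilon \to 0+} \tau(x+\varepsilon) \in \overline{\Omega}$ and $\tau(x) = \lim_{\varepsilon \to 0+} \tau^{+}(x-\varepsilon) \in \overline{\Omega_{+}}$, while at the endpoints $\tau(0) = \tau^{+}(0) = \overline{0}$ and $\tau(1) = \tau^{+}(1) = \overline{1}$ lie in both sets. For the reverse $\overline{\Omega} \subset \Omega \cup \Omega_{+}$, take $\sigma_n \to \sigma$ in $\Omega$ and extract a monotone subsequence (possible since $(I^{\infty}, \preceq)$ is totally ordered, via the standard peak/no-peak argument). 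A non-decreasing subsequence places $\sigma \in \Omega$ by part (i). For a non-increasing subsequence $\sigma_n = \tau(y_n)$, strict monotonicity forces $y_n \searrow y$, and the right-limit identity $\tau^{+}(y) = \lim_{\varepsilon \to 0+} \tau(y+\varepsilon)$ from property 7 identifies $\sigma$ with $\tau^{+}(y) \in \Omega_{+}$. The same monotone-subsequence argument applied to sequences in $\Omega_{+}$ gives $\overline{\Omega_{+}} \subset \Omega \cup \Omega_{+}$, completing the chain $\overline{\Omega} = \overline{\Omega_{+}} = \Omega \cup \Omega_{+}$.

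For the final equality $\overline{\Omega \cap \Omega_{+}} = \overline{\Omega}$, only the inclusion $\overline{\Omega} \subset \overline{\Omega \cap \Omega_{+}}$ is nontrivial. By property 5, $\tau(x) = \tau^{+}(x)$ iff $x \notin E := \bigcup_{k \in \mathbb{N}} W^{-k}(\rho)$; by properties 1 and 3 each $W^{-k}(\rho)$ is finite, so $E$ is countable and $[0,1] \setminus E$ is dense in $[0,1]$. Hence for any $x \in [0,1]$ I can choose $y_n \nearrow x$ with $y_n \notin E$, and then $\tau(y_n) = \tau^{+}(y_n) \in \Omega \cap \Omega_{+}$ converges to $\tau(x)$ by left-continuity. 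The only delicate step I anticipate is the non-increasing case in the second paragraph, where strict monotonicity of $\tau$ must be paired with the right-limit formula in order to place the limit in $\Omega_{+}$ rather than $\Omega$; once that pairing is fixed, everything else is bookkeeping with the properties established in Section \ref{basicsec}.
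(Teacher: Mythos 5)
Your proof is correct and follows essentially the same route as the paper's: part (i) is the identical monotonicity-plus-left-continuity argument, and part (ii) rests on the same three ingredients, namely the order structure of $I^{\infty}$, the one-sided limit formulas of property (7), and the density of the set $Q=\{x:\tau(x)=\tau^{+}(x)\}$ (your $[0,1]\setminus E$). The only difference is organizational: where the paper compresses the key step into $\Omega\cup\Omega_{+}=L(\tau(Q))\cup R(\tau(Q))=\overline{\tau(Q)}$, you make the underlying monotone-subsequence extraction and the identification of decreasing limits with values of $\tau^{+}$ explicit, which is a faithful unpacking of what the paper leaves implicit.
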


\begin{proof}
Proof of (i): By (6) $\tau:[0,1]\rightarrow I^{\infty}$ is monotone strictly
increasing. By (7) $\tau$ is continuous from the left. Let $\{z_{n}%
\}_{n=0}^{\infty}$ be a non-decreasing sequence of points in $\Omega.$ Let
$y_{n}=\tau^{-1}(z_{n})$. Let $y=\lim y_{n}\in\lbrack0,1]$. Since $\tau$ is
continuous from the left, $\Omega\ni\tau(y)=\tau(\lim y_{n})=\lim\tau
(y_{n})=\lim z_{n}$. It follows that $\Omega$ is closed from the left.
Similarly, $\Omega_{+}$ is closed from the right.

Proof of (ii): Let $Q=\{x\in\lbrack0,1]:\tau(x)=\tau^{+}(x)\}$. Then by (4)
$\overline{Q}=[0,1].$ Also, by (5),
\[
\Omega\cap\Omega_{+}=\tau([0,1])\cap\tau^{+}([0,1])=\tau(Q)=\tau^{+}(Q).
\]
Hence
\[
\overline{\Omega\cap\Omega_{+}}=\overline{\tau(Q)}=\overline{\tau^{+}%
(Q)}=\overline{\Omega}=\overline{\Omega_{+}}.
\]

Finally, $\Omega\cup\Omega_{+}=L(\tau(Q))\cup R(\tau^{+}(Q))=L(\tau(Q))\cup
R(\tau(Q))=$ $\overline{\tau(Q)}=\overline{\Omega}$.
\end{proof}

We define $s_{i}:I^{\infty}\rightarrow I^{\infty}$ by $s_{i}(\sigma)=i\sigma$
$(i=0,1).$ Note that both $s_{0},$ and $s_{1}$, are contractions with
contractivity $1/2$. We write $2^{I^{\infty}}$ to denote the set of all
subsets of $I^{\infty}.$ For $\sigma,\omega\in I^{\infty}$ we define%
\begin{align*}
\lbrack\sigma,\omega]  &  :=\{\zeta\in I^{\infty}:\sigma\preceq\zeta
\preceq\omega\},\\
(\sigma,\omega)  &  :=\{\zeta\in I^{\infty}:\sigma\prec\zeta\prec\omega\},\\
(\sigma,\omega]  &  :=\{\zeta\in I^{\infty}:\sigma\prec\zeta\preceq\omega\},\\
\lbrack\sigma,\omega)  &  :=\{\zeta\in I^{\infty}:\sigma\preceq\zeta
\prec\omega\}.
\end{align*}

\begin{proposition}
\label{omegaprop}Let $\alpha=S(\tau(\rho))$ and $\beta=S(\tau^{+}(\rho)).$

(i) $\Omega=\bigcap\limits_{k\in\mathbb{N}}\Psi^{k}([\overline{0},\overline
{1}])$ where $\Psi:2^{I^{\infty}}\rightarrow2^{I^{\infty}}$ is defined by
\[
2^{I^{\infty}}\ni\Lambda\mapsto s_{0}(\Lambda\cap\lbrack\overline{0}%
,\alpha])\cup s_{1}(\Lambda\cap(\beta,\overline{1}]).
\]

(ii) $\Omega_{+}=\bigcap\limits_{k\in\mathbb{N}}\Psi_{+}^{k}([\overline
{0},\overline{1}])$ where $\Psi_{+}:2^{I^{\infty}}\rightarrow2^{I^{\infty}}$
is defined by
\[
2^{I^{\infty}}\ni\Lambda\mapsto s_{0}(\Lambda\cap\lbrack\overline{0}%
,\alpha))\cup s_{1}(\Lambda\cap\lbrack\beta,\overline{1}]).
\]

(iii) $\overline{\Omega}=\overline{\Omega_{+}}=\bigcap\limits_{k\in\mathbb{N}%
}\overline{\Psi}^{k}([\overline{0},\overline{1}])$ where $\overline{\Psi
}:2^{I^{\infty}}\rightarrow2^{I^{\infty}}$ is defined by%
\[
2^{I^{\infty}}\ni\Lambda\mapsto s_{0}(\Lambda\cap\lbrack\overline{0}%
,\alpha])\cup s_{1}(\Lambda\cap\lbrack\beta,\overline{1}]).
\]

\end{proposition}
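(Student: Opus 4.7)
The plan is to convert the set-valued iteration $\Psi^k$ into an equivalent shift-type condition on individual sequences, and then to use the projection $\hat{\pi}$ from property (12) to realize each qualifying sequence as an itinerary. Writing $I_0 = [\overline{0},\alpha]$ and $I_1 = (\beta, \overline{1}]$ so that $\Psi(\Lambda) = s_0(\Lambda \cap I_0) \cup s_1(\Lambda \cap I_1)$, the first step is a straightforward induction on $k$ giving
\[
\sigma \in \Psi^k(X) \iff S^k(\sigma) \in X \text{ and } S^{j+1}(\sigma) \in I_{\sigma_j} \text{ for } 0 \le j < k.
\]
With $X = [\overline{0},\overline{1}] = I^{\infty}$ this reduces (i) to the single assertion that $\sigma \in \Omega$ iff $S^{j+1}(\sigma) \in I_{\sigma_j}$ for every $j \ge 0$; parts (ii) and (iii) reduce analogously after replacing the pair $(I_0, I_1)$ by $([\overline{0},\alpha), [\beta,\overline{1}])$ and by $([\overline{0},\alpha],[\beta,\overline{1}])$ respectively.

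For the forward inclusion in (i), given $\sigma = \tau(x)$ I would compute $S^{j+1}(\sigma) = \tau(W_{\sigma_j}(W^j(x)))$ and split on $\sigma_j$. The case $\sigma_j = 0$ is immediate from $W^j(x) \le \rho$ and monotonicity of $\tau$: $\tau(W_0(W^j(x))) \preceq \tau(W_0(\rho)) = \alpha$. The case $\sigma_j = 1$ is the main obstacle, because I need the \emph{strict} inequality $\tau(W_1(W^j(x))) \succ \beta = \tau^{+}(W_1(\rho))$, while $\tau$ and $\tau^{+}$ can differ on the countable set $\bigcup_k W^{-k}(\rho)$. I expect to resolve it by interpolating a point $y$ with $\rho < y < W^j(x)$ and chaining
\[
\tau(W_1(W^j(x))) \succeq \tau^{+}(W_1(y)) \succ \tau^{+}(W_1(\rho)) = \beta,
\]
using property (7) to pass $\tau$ above $\tau^{+}$ at a slightly smaller argument, together with strict monotonicity of $\tau^{+}$. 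This asymmetric strictness is exactly what dictates the half-open/closed choices in the definitions of $\Psi$, $\Psi_{+}$, and $\overline{\Psi}$.

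For the reverse inclusion, assume $\sigma$ satisfies the shift condition and put $x = \hat{\pi}(\sigma)$. Property (13) gives $\tau(x) \preceq \sigma \preceq \tau^{+}(x)$. If $\tau(x) = \tau^{+}(x)$, property (5) forces $\sigma = \tau(x) \in \Omega$. Otherwise (5) places $x \in W^{-n}(\rho)$ for some minimal $n \ge 0$, and unwinding the definitions of $\tau$ and $\tau^{+}$ at such $x$ (using that $W$ and $W_{+}$ agree off $\rho$) yields the explicit form $\tau(x) = \eta\,0\,\alpha$ and $\tau^{+}(x) = \eta\,1\,\beta$ for a common prefix $\eta \in \{0,1\}^n$. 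The sandwich then forces $\sigma|_{n-1} = \eta$, and the induced bounds on $S^{n+1}(\sigma)$, combined with the shift condition $S^{n+1}(\sigma) \in I_{\sigma_n}$, pin $S^{n+1}(\sigma)$ to $\alpha$ or $\beta$ in one branch of a case split on $\sigma_n$ and produce a contradiction in the other. In (i) the surviving branch is $\sigma_n = 0$, identifying $\sigma = \tau(x) \in \Omega$; in (ii) the roles flip and $\sigma = \tau^{+}(x) \in \Omega_{+}$; in (iii) both branches survive and produce $\sigma \in \Omega \cup \Omega_{+} = \overline{\Omega}$ by Proposition \ref{closedlemma}(ii). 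The opposite inclusion $\overline{\Omega} \subseteq \bigcap_k \overline{\Psi}^k([\overline{0},\overline{1}])$ in (iii) then follows from parts (i), (ii), Proposition \ref{closedlemma}(ii), and the monotone pointwise containments $\Psi^k \subseteq \overline{\Psi}^k$ and $\Psi_{+}^k \subseteq \overline{\Psi}^k$.
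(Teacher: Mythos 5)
Your proposal is correct and follows essentially the same route as the paper: your inductive reformulation of $\sigma\in\Psi^{k}(I^{\infty})$ as a shift condition is exactly the paper's equation (\ref{nameeq}), and your reverse inclusion via $x=\hat{\pi}(\sigma)$ and the sandwich $\tau(x)\preceq\sigma\preceq\tau^{+}(x)$ is the paper's argument with $\omega_{-}=\sup\{\zeta\in\Omega:\zeta\preceq\omega\}$ and $\omega_{+}=\inf\{\zeta\in\Omega:\omega\preceq\zeta\}$ repackaged through property (13). The only real difference is cosmetic: for the forward inclusion the paper invokes $S(\Omega)=\Omega$ and identifies $s_{0},s_{1}$ as the inverse branches of $S|_{\Omega}$ rather than checking the shift condition digit by digit, while your explicit handling of the strict inequality at $\beta$ and of the form $\tau(x)=\eta\,0\,\alpha$, $\tau^{+}(x)=\eta\,1\,\beta$ at preimages of $\rho$ makes precise details the paper leaves implicit.
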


\begin{proof}
Proof of (i): Let $S|_{\Omega}:\Omega\rightarrow\Omega$ denote the domain and
range restricted shift map. It is readily found that the branches of
$S|_{\Omega}^{-1}:\Omega\rightarrow\Omega$ are $s_{0}|_{\Omega}:[\overline
{0},\alpha]\cap\Omega\rightarrow\Omega$ where
\[
s_{0}|_{\Omega}(\sigma)=s_{0}(\sigma)=0\sigma\text{ for all }\sigma\in
\lbrack\overline{0},\alpha]\cap\Omega,
\]
and $s_{1}|_{\Omega}:(\beta,\overline{1}]\cap\Omega\rightarrow\Omega$ where
\[
s_{1}|_{\Omega}(\sigma)=s_{1}(\sigma)=1\sigma\text{ for all }\sigma\in
(\beta,\overline{1}]\cap\Omega.
\]
(Note that $\alpha_{0}=1,$ $\beta_{0}=0$ and $\beta\prec\alpha.)$ It follows
that
\[
S|_{\Omega}^{-1}(\Lambda)=s_{0}(\Lambda\cap\lbrack\overline{0},\alpha])\cup
s_{1}(\Lambda\cap(\beta,\overline{1}])=\Psi(\Lambda)
\]
for all $\Lambda\subset\Omega$. Since $\Omega\subset\lbrack\overline
{0},\overline{1}]$ it follows that
\[
\Omega=S|_{\Omega}^{-1}(\Omega)=\Psi(\Omega)\subset\Psi([\overline
{0},\overline{1}]).
\]
Also, since $\Psi([\overline{0},\overline{1}])\subset\lbrack\overline
{0},\overline{1}]$ it follows that $\{\Psi^{k}([\overline{0},\overline{1}])\}$
is a decreasing (nested) sequence of sets, each of which contains $\Omega;$
hence%
\[
\Omega\subset\bigcap\limits_{k\in\mathbb{N}}\Psi^{k}([\overline{0}%
,\overline{1}])\text{.}%
\]

It remains to prove that $\Omega\supset\bigcap\limits_{k\in\mathbb{N}}\Psi
^{k}([\overline{0},\overline{1}])$. We note that $s_{0}([\overline{0}%
,\alpha])=[\overline{0},\tau(\rho)]$ and $s_{1}((\beta,\overline{1}%
])=(\tau^{+}(\rho),\overline{1}],$ from which it follows that
\begin{equation}
\bigcap\limits_{k\in\mathbb{N}}\Psi^{k}([\overline{0},\overline{1}%
])=\bigcap\limits_{k\in\mathbb{N}}\{\sigma\in I^{\infty}:S^{k}(\sigma
)\in\lbrack\overline{0},\tau(\rho)]\cup(\tau^{+}(\rho),\overline{1}]\}.
\label{nameeq}%
\end{equation}
Let $\omega\in\bigcap\limits_{k\in\mathbb{N}}\Psi^{k}([\overline{0}%
,\overline{1}])$. Suppose $\omega\notin\Omega$. Let
\[
\omega_{-}=\sup\{\sigma\in\Omega:\sigma\preceq\omega\}\text{ and }\omega
_{+}=\inf\{\sigma\in\Omega:\omega\preceq\sigma\},
\]
so that%
\[
\omega_{-}\preceq\omega\preceq\omega_{+}\text{.}%
\]
But $\omega_{-}\in\Omega$ (since $\Omega$ is closed from the left), so%
\[
\omega_{-}\prec\omega\preceq\omega_{+}\text{.}%
\]
Note that, since $\inf\{\sigma\in\Omega:\omega\preceq\sigma\}$ $=\inf
\{\sigma\in\Omega_{+}:\omega\preceq\sigma\}$, and $\Omega_{+}$ is closed from
the right, we have $\omega_{+}\in\Omega_{+}$. Let $K=\min\{k\in\mathbb{N}%
:\left(  \omega_{-}\right)  _{k}\neq\left(  \omega_{+}\right)  _{k}\}$. Then
$S^{K}(\omega_{-})\prec S^{K}(\omega)\preceq S^{K}(\omega_{+})$ and we must
have $S^{K}(\omega_{-})=\tau(\rho)$ and $S^{K}(\omega_{+})=\tau^{+}(\rho)$. So%
\[
\tau(\rho)\prec S^{K}(\omega)\preceq\tau^{+}(\rho)\text{,}%
\]
therefore $\omega\notin\{\sigma\in I^{\infty}:S^{K}(\sigma)\in\lbrack
\overline{0},\tau(\rho)]\cup(\tau^{+}(\rho),\overline{1}]\}$ which, because of
(\ref{nameeq}), contradicts our assumption that $\omega\in\bigcap
\limits_{k\in\mathbb{N}}\Psi^{k}([\overline{0},\overline{1}])$. Hence
$\omega\in\Omega$ and we have
\[
\Omega\supset\bigcap\limits_{k\in\mathbb{N}}\Psi^{k}([\overline{0}%
,\overline{1}])\text{.}%
\]

This completes the proof of (i).

Proof of (ii): similar to the proof of (i), with the role of $[\overline
{0},\tau(\rho)]$ played by $[\overline{0},\tau(\rho))$ and the role of
$(\tau^{+}(\rho),\overline{1}]$ played by $[\tau^{+}(\rho),\overline{1}]$.

Proof of (iii): similar to the proofs of (i) and (ii).
\end{proof}

It is helpful to note that the addresses $\alpha$ and $\beta$ in Proposition
\ref{omegaprop} obey%
\begin{align*}
\alpha &  =\tau(W_{0}(\rho)),\beta=\tau(W_{1}(\rho)),\\
\tau(\rho)  &  =0\alpha=01\alpha_{1}\alpha_{2}...\text{ and }\tau^{+}%
(\rho)=0\beta=10\beta_{1}\beta_{2}....
\end{align*}
Let $M>0$ be such that $D_{k,M+1}=\rho$. It follows from the discussion at the
end of Section \ref{basicsec} that $\tau((D_{k,M},\rho))=\tau^{+}%
((D_{k,M},\rho))=01\alpha_{1}\alpha_{2}..\alpha_{k-2}$ and $\tau
((\rho,D_{k,M+2}))=\tau^{+}((\rho,D_{k,M+2}))=10\beta_{1}\beta_{2}%
...\beta_{k-2}$.

\begin{corollary}
\label{addresscor}Let $k\geq1,$ $\alpha=\tau(W_{0}(\rho))$, $\beta=\tau
(W_{1}(\rho)),$ and let $M>0$ be such that $D_{k,M+1}=\rho$. The set of
addresses $\{\tau((D_{k,l},D_{k,l+1}))\}_{l=0}^{D(k)-1}$ is uniquely
determined by $\alpha|_{k-1}$ and $\beta|_{k-1}$. For some $n_{1},n_{2}$ such
that $0\leq n_{1}<M<n_{2}\leq D(k)-1$, $\tau((D_{k,n_{1}},D_{k,n_{1}%
+1}))=\beta_{0}\beta_{1}...\beta_{k-2}\beta_{k-1}$ and $\tau((D_{k,n_{2}%
},D_{k,n_{2}+1}))=\alpha_{0}\alpha_{1}...\alpha_{k-2}\alpha_{k-1}$. The set of
addresses $\{\tau((D_{k,l},D_{k,l+1})):l\in\{0,1,...,D(k)-1\},l\neq
n_{1},l\neq n\}$ are uniquely determined by $\alpha|_{k-2}$ and $\beta|_{k-2}%
$; for example, $\tau((D_{k,M},\rho))=$ $0\alpha_{0}\alpha_{1}...\alpha_{k-2}%
$, and $\tau((\rho,D_{k,M+2}))=1\beta_{0}\beta_{1}...\beta_{k-2}$.

\begin{proof}
It follows from Proposition \ref{omegaprop} that the set of addresses at level
$k$, namely $\{\tau((D_{k,l},D_{k,l+1}))\}_{l=0}^{D(k)-1},$ is invariant under
the following operation: put a $"0"$ in front of each address that is less
than or equal to $\alpha$, then truncate back to length $k;$ take the union of
the resulting set of addresses with the set of addresses obtained by: put a
$"1"$ in front of each address that is greater than or equal to $\beta$, and
drop the last digit.
\end{proof}
\end{corollary}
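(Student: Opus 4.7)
The plan is to prove Corollary \ref{addresscor} by induction on $k$, with Proposition \ref{omegaprop}(i) doing the heavy lifting. First I would identify the level-$k$ address set $A_k := \{\tau((D_{k,l},D_{k,l+1}))\}_{l=0}^{D(k)-1}$ with the set $\pi_k(\Omega) = \{\sigma|_{k-1} : \sigma \in \Omega\}$ of length-$k$ prefixes of $\Omega$; this is immediate from property (2) of Section \ref{basicsec} and $\Omega = \tau([0,1])$.

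Applying the length-$k$ prefix operator to the identity $\Omega = s_0(\Omega \cap [\overline{0},\alpha]) \cup s_1(\Omega \cap (\beta,\overline{1}])$ from Proposition \ref{omegaprop}(i), I would derive the recursion
\[
A_k = \{0\phi : \phi \in A_{k-1},\ \phi \preceq \alpha|_{k-2}\} \cup \{1\phi : \phi \in A_{k-1},\ \phi \succeq \beta|_{k-2}\}.
\]
The forward inclusion is straightforward. For the reverse, each qualifying $\phi$ needs a witness $\sigma \in \Omega$ extending $\phi$ with $\sigma \preceq \alpha$ (respectively $\sigma \succ \beta$): the strict cases are automatic, the case $\phi = \alpha|_{k-2}$ uses $\sigma = \alpha \in \Omega$, and the case $\phi = \beta|_{k-2}$ requires $\sigma = \tau(W_1(\rho) + \varepsilon)$ for sufficiently small $\varepsilon > 0$, which lies in $\Omega$, satisfies $\sigma \succ \beta$ by the right-continuity of $\tau^+$ (property (7)), and agrees with $\beta$ on the first $k-1$ symbols once $\varepsilon$ is small enough.

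The induction is now short. The base case $k = 1$ gives $A_1 = \{0,1\}$ with $M = 0$, $n_1 = 0$, $n_2 = 1$. For the inductive step, the recursion only invokes $A_{k-1}$ together with $\alpha|_{k-2}$ and $\beta|_{k-2}$, so by the inductive hypothesis $A_k$ is determined by $\alpha|_{k-2}$ and $\beta|_{k-2}$, and hence by $\alpha|_{k-1}$ and $\beta|_{k-1}$. To locate the distinguished addresses I would use that $\tau(W_0(\rho)) = \alpha$ and $\tau$ is constant on each open level-$k$ interval: the interval whose closure contains $W_0(\rho)$ thus has address $\alpha|_{k-1}$, and since $\alpha_0 = 1$ puts $W_0(\rho) > \rho$, its index $n_2$ satisfies $n_2 > M$. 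The symmetric argument for $W_1(\rho)$, appealing to $\tau^+$ on the right-continuous side, yields $\beta|_{k-1}$ at an index $n_1 < M$. The addresses at $l \neq n_1, n_2$ then arise from the recursion using only $\alpha|_{k-2}, \beta|_{k-2}$; in particular, the boundary liftings $\phi = \alpha|_{k-2}$ and $\phi = \beta|_{k-2}$ produce $0\alpha|_{k-2}$ and $1\beta|_{k-2}$, matching $\tau((D_{k,M},\rho))$ and $\tau((\rho,D_{k,M+2}))$.

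The main obstacle is the boundary case $\phi = \beta|_{k-2}$ in the recursion, since $\beta \in \Omega_+$ but possibly $\beta \notin \Omega$, so one cannot simply take $\sigma = \beta$ as a witness. Proposition \ref{closedlemma}(ii) ($\overline{\Omega} = \overline{\Omega_+}$) together with property (7) of Section \ref{basicsec} supply the needed approximant from above within $\Omega$; everything else is routine bookkeeping.
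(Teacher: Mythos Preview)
Your proposal is correct and follows essentially the same route as the paper: both extract from Proposition~\ref{omegaprop} the rule that the level-$k$ address set is generated by prepending $0$ to addresses $\preceq\alpha$ and $1$ to addresses $\succeq\beta$ (and truncating), and then read off the claimed determinacy. The paper's proof is a single sentence asserting this invariance, whereas you unwind it as an explicit recursion $A_{k-1}\to A_k$, run the induction, and handle the boundary witnesses $\phi=\alpha|_{k-2}$ and $\phi=\beta|_{k-2}$ with the care the paper omits; the extra detail is sound and does not change the underlying argument.
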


\section{\label{symmetrysec}Symmetry of $\overline{\Omega}$ and a consequent
homeomorphism of $[0,1]$}

\begin{lemma}
\label{lemma1} $\overline{\Omega}=\{\sigma\in I^{\infty}:\text{ for all }%
k\in\mathbb{N}$, $\sigma_{k}=0\Rightarrow S^{k}(\sigma)\preceq\tau(\rho)$ and
$\sigma_{0}=1\Rightarrow\tau^{+}(\rho)\preceq S^{k}(\sigma)\}.$
\end{lemma}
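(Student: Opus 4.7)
The plan is to read off the lemma directly from Proposition \ref{omegaprop}(iii), which already presents $\overline{\Omega}$ as the decreasing intersection $\bigcap_{k\in\mathbb{N}}\overline{\Psi}^{k}([\overline{0},\overline{1}])$ with $\overline{\Psi}(\Lambda)=s_{0}(\Lambda\cap[\overline{0},\alpha])\cup s_{1}(\Lambda\cap[\beta,\overline{1}])$. The whole proof is then a translation of this inverse-image description into the per-coordinate condition in the lemma.

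First I would compute $\overline{\Psi}([\overline{0},\overline{1}])$ explicitly. Since $s_{0}$ is an order isomorphism of $I^{\infty}$ onto $[\overline{0},0\overline{1}]$ and $s_{1}$ is an order isomorphism onto $[1\overline{0},\overline{1}]$, and since the identities $\tau(\rho)=0\alpha$ and $\tau^{+}(\rho)=1\beta$ are recorded after Proposition \ref{omegaprop}, one gets
\[
\overline{\Psi}([\overline{0},\overline{1}])=[\overline{0},\tau(\rho)]\cup[\tau^{+}(\rho),\overline{1}].
\]
More generally, for any $\Lambda\subset I^{\infty}$, a sequence $\sigma$ lies in $\overline{\Psi}(\Lambda)$ iff either $\sigma_{0}=0$ and $S(\sigma)\in\Lambda\cap[\overline{0},\alpha]$, or $\sigma_{0}=1$ and $S(\sigma)\in\Lambda\cap[\beta,\overline{1}]$.

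Next I would prove by induction on $k\geq 1$ that
\[
\overline{\Psi}^{k}([\overline{0},\overline{1}])=\bigl\{\sigma\in I^{\infty}:S^{j}(\sigma)\in[\overline{0},\tau(\rho)]\cup[\tau^{+}(\rho),\overline{1}]\text{ for }0\leq j<k\bigr\}.
\]
The base case $k=1$ was just computed. For the inductive step, the characterization above of $\overline{\Psi}(\Lambda)$ says that $\sigma\in\overline{\Psi}^{k+1}([\overline{0},\overline{1}])$ iff $\sigma_{0}=0$ with $S(\sigma)\in\overline{\Psi}^{k}([\overline{0},\overline{1}])\cap[\overline{0},\alpha]$, or $\sigma_{0}=1$ with $S(\sigma)\in\overline{\Psi}^{k}([\overline{0},\overline{1}])\cap[\beta,\overline{1}]$. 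Using $\tau(\rho)=0\alpha$ and $\tau^{+}(\rho)=1\beta$, the two boundary conditions $\sigma_{0}=0$ and $S(\sigma)\preceq\alpha$ amount exactly to $\sigma\in[\overline{0},\tau(\rho)]$, and similarly for the other branch; combined with the inductive hypothesis applied to $S(\sigma)$, this is the condition for $k+1$. Intersecting over $k$ and using Proposition \ref{omegaprop}(iii) gives
\[
\overline{\Omega}=\bigl\{\sigma\in I^{\infty}:S^{k}(\sigma)\in[\overline{0},\tau(\rho)]\cup[\tau^{+}(\rho),\overline{1}]\text{ for every }k\in\mathbb{N}\bigr\}.
\]

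Finally I would convert this to the form stated in the lemma by noting that $\tau(\rho)$ begins with $0$ and $\tau^{+}(\rho)$ begins with $1$: therefore $S^{k}(\sigma)\in[\overline{0},\tau(\rho)]$ forces $\sigma_{k}=0$ and says exactly that $S^{k}(\sigma)\preceq\tau(\rho)$, while $S^{k}(\sigma)\in[\tau^{+}(\rho),\overline{1}]$ forces $\sigma_{k}=1$ and says $\tau^{+}(\rho)\preceq S^{k}(\sigma)$. Since for each $k$ exactly one of $\sigma_{k}=0$ or $\sigma_{k}=1$ holds, the disjunction $S^{k}(\sigma)\in[\overline{0},\tau(\rho)]\cup[\tau^{+}(\rho),\overline{1}]$ is equivalent to the pair of conditional statements in the lemma. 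There is no real obstacle here; the only point that needs a little attention is keeping the shift bookkeeping straight and using the correct starting digit of $\tau(\rho)$ and $\tau^{+}(\rho)$ so that the inequalities on $S^{k}(\sigma)$ pass correctly through $s_{0}$ and $s_{1}$.
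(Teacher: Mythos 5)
Your proof is correct and follows essentially the same route as the paper, which simply declares the lemma an immediate consequence of Proposition \ref{omegaprop}(iii); your induction on $\overline{\Psi}^{k}$ is exactly the unwinding the paper carries out for $\Psi$ in equation (\ref{nameeq}) during the proof of Proposition \ref{omegaprop}(i), adapted to the closed intervals. You also correctly read $\tau^{+}(\rho)=1\beta$ (the paper has a typo writing $0\beta$ there) and $\sigma_{k}=1$ in the lemma's second implication (where the paper misprints $\sigma_{0}=1$), so the translation between the interval condition and the two conditional statements is sound.
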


\begin{proof}
This is an immediate consequence of Proposition \ref{omegaprop}.
\end{proof}

\begin{corollary}
\label{corollary2} $\overline{\Omega}$ is symmetric if and only if
$\alpha=\beta^{\ast}$ (or equivalently $\tau(\rho)=\left(  \tau^{+}%
(\rho)\right)  ^{\ast}$).
\end{corollary}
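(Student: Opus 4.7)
The plan is to exploit Lemma~\ref{lemma1}, which describes $\overline{\Omega}$ by a single condition involving $\tau(\rho)$ and $\tau^{+}(\rho)$, together with the fact, from property~(10), that the symmetry map $^{\ast}$ is an order-reversing involution, which furthermore commutes with the shift: $S(\sigma^{\ast})=(S\sigma)^{\ast}$. Since $\tau(\rho)=0\alpha$ and $\tau^{+}(\rho)=1\beta$, the equivalence of the two forms of the condition is immediate: $\tau(\rho)=(\tau^{+}(\rho))^{\ast}$ reads $0\alpha=0\beta^{\ast}$, i.e.\ $\alpha=\beta^{\ast}$.

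For the forward direction, I would first observe that $\tau(\rho)$ and $\tau^{+}(\rho)$ both belong to $\overline{\Omega}$ (the former lies in $\Omega$, the latter in $\Omega_{+}$, and both are contained in $\overline{\Omega}$ by Proposition~\ref{closedlemma}). By Lemma~\ref{lemma1} applied at $k=0$, every $\sigma\in\overline{\Omega}$ with $\sigma_{0}=0$ satisfies $\sigma\preceq\tau(\rho)$, and every $\sigma\in\overline{\Omega}$ with $\sigma_{0}=1$ satisfies $\tau^{+}(\rho)\preceq\sigma$. Hence $\tau(\rho)$ is the maximum element of $\{\sigma\in\overline{\Omega}:\sigma_{0}=0\}$ and $\tau^{+}(\rho)$ is the minimum of $\{\sigma\in\overline{\Omega}:\sigma_{0}=1\}$. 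Assuming $\overline{\Omega}^{\ast}=\overline{\Omega}$ and applying the order-reversing bijection $^{\ast}$, the image of the first set equals the second, and the maximum is sent to the minimum; thus $(\tau(\rho))^{\ast}=\tau^{+}(\rho)$, as required.

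For the backward direction, assume $\tau(\rho)=(\tau^{+}(\rho))^{\ast}$. I would show directly from Lemma~\ref{lemma1} that $\sigma\in\overline{\Omega}\Longleftrightarrow\sigma^{\ast}\in\overline{\Omega}$. Fix $\sigma$ and $k$, and translate each clause of the Lemma~\ref{lemma1} condition for $\sigma^{\ast}$:
\[
(\sigma^{\ast})_{k}=0\Longleftrightarrow\sigma_{k}=1,\qquad S^{k}(\sigma^{\ast})=(S^{k}\sigma)^{\ast}.
\]
The clause ``$(\sigma^{\ast})_{k}=0\Rightarrow S^{k}(\sigma^{\ast})\preceq\tau(\rho)$'' becomes ``$\sigma_{k}=1\Rightarrow(S^{k}\sigma)^{\ast}\preceq\tau(\rho)$'', and applying $^{\ast}$ to both sides of the inequality (which reverses it) together with the hypothesis $(\tau(\rho))^{\ast}=\tau^{+}(\rho)$ turns this into ``$\sigma_{k}=1\Rightarrow\tau^{+}(\rho)\preceq S^{k}\sigma$''. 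A symmetric computation handles the other clause. The resulting pair of conditions is exactly the Lemma~\ref{lemma1} condition for $\sigma$, so $\sigma^{\ast}\in\overline{\Omega}\Longleftrightarrow\sigma\in\overline{\Omega}$, i.e.\ $\overline{\Omega}^{\ast}=\overline{\Omega}$.

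There is no serious obstacle here; the argument is essentially formal, and the only point that needs care is keeping track of how $^{\ast}$ interacts with $\preceq$ and with $S$ when rewriting the conditions from Lemma~\ref{lemma1}. The equivalence of the two forms of the criterion is just a matter of reading off the initial digits of $\tau(\rho)$ and $\tau^{+}(\rho)$.
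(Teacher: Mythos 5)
Your argument is correct and follows exactly the route the paper intends: the paper states Corollary~\ref{corollary2} without an explicit proof, treating it as immediate from Lemma~\ref{lemma1}, and your write-up is precisely the natural filling-in of that step (identifying $\tau(\rho)$ and $\tau^{+}(\rho)$ as the extreme elements of the two halves of $\overline{\Omega}$ for the forward direction, and rewriting the Lemma~\ref{lemma1} conditions under the order-reversing, shift-commuting involution $^{\ast}$ for the converse). No issues.
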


\begin{lemma}
\label{lemma3} The maps $\tau(\rho)$ and $\tau^{+}(\rho)$ are strictly
increasing as functions of $\rho\in\lbrack a,b]$ to $I^{\infty}$.
\end{lemma}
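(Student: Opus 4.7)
The plan is to compare the two orbits of $\rho_1$ and $\rho_2$ directly via a uniform-expansion estimate, and use the fact that $d>1$ strictly to force any first disagreement between the two itineraries to go in the correct direction.

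Fix $\rho_1<\rho_2$ in the parameter interval, and write $\sigma=\tau(\rho_1)$ and $\sigma'=\tau(\rho_2)$, evaluated under the dynamics $W_{\rho_1}$ and $W_{\rho_2}$ respectively. Set $u_0=\rho_1$, $u_{k+1}=W_{\rho_1}(u_k)$ and $v_0=\rho_2$, $v_{k+1}=W_{\rho_2}(v_k)$; by definition $\sigma_k=0\Leftrightarrow u_k\in[0,\rho_1]$ and $\sigma'_k=0\Leftrightarrow v_k\in[0,\rho_2]$. The key observation is that whenever $\sigma_j=\sigma'_j$ both systems apply the same branch $W_{\sigma_j}$, since $\sigma_j=0$ places $u_j$ in $[0,\rho_1]$ and $v_j$ in $[0,\rho_2]$ (both mapped by $W_0$) and analogously for $\sigma_j=1$. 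By induction, if $\sigma|_k=\sigma'|_k$ then $u_k$ and $v_k$ are the images of $\rho_1$ and $\rho_2$ under the same composition $W_{\sigma_{k-1}}\circ\cdots\circ W_{\sigma_0}$, and the uniform bound $W_i'\geq d>1$ yields
\[
v_k-u_k\geq d^k(\rho_2-\rho_1).
\]

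This immediately forces $\sigma\neq\sigma'$, since otherwise $v_k-u_k$ would diverge while $u_k,v_k\in[0,1]$. Let $k^*$ be the least index where they disagree; since $\sigma_0=\sigma'_0=0$ (as $\rho_1\in[0,\rho_1]$ and $\rho_2\in[0,\rho_2]$) we have $k^*\geq 1$. Either $(\sigma_{k^*},\sigma'_{k^*})=(0,1)$, giving $\sigma\prec\sigma'$ as desired, or $(\sigma_{k^*},\sigma'_{k^*})=(1,0)$. In the latter bad case, $u_{k^*}>\rho_1$ and $v_{k^*}\leq\rho_2$; combined with $u_{k^*}<v_{k^*}$ (strict monotonicity of the common composition applied to $\rho_1<\rho_2$) this forces $v_{k^*}-u_{k^*}<\rho_2-\rho_1$, contradicting the displayed estimate $v_{k^*}-u_{k^*}\geq d^{k^*}(\rho_2-\rho_1)>\rho_2-\rho_1$. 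Hence the good case must hold.

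The argument for $\tau^+$ is structurally identical: the only cosmetic change is that $\tau^+(\rho)_0=1$ always (since $\rho\notin[0,\rho)$), which still forces the first index of disagreement to satisfy $k^*\geq 1$; the bad case now produces $\rho_1\leq u_{k^*}<v_{k^*}<\rho_2$ and the same contradiction with the expansion estimate. The main pitfall, as I see it, is the bookkeeping that matching itinerary prefixes really do force identical compositions of branches across two distinct dynamical systems $W_{\rho_1}$ and $W_{\rho_2}$; once this bookkeeping is in place, the uniform expansion estimate closes the argument cleanly.
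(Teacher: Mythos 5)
Your proof is correct and rests on the same mechanism as the paper's: matching itinerary prefixes force both parameters through the same composition of branches, and the uniform expansion bound $d^{k}>1$ contradicts the separation $v_{k^*}-u_{k^*}<\rho_2-\rho_1$ that a wrong-way (or absent) first disagreement would impose. Your two-endpoint bookkeeping is in fact slightly cleaner than the paper's, which works with $g(\xi)=W_{\xi}^{n}(\xi)$ over the whole parameter interval $[\rho,\rho']$ and therefore needs an extra reduction step ("we may assume $\tau(\rho)|_{n}$ is constant on $[\rho,\rho']$") that your version avoids.
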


\begin{proof}
Note that $\tau(\rho)$ depends both implicitly and explicitly on $\rho.$ Let
$1-b\leq\rho<\rho^{\prime}\leq a$ be such that $\tau(\rho)\succeq$ $\tau
(\rho^{\prime})$. Observe that $\tau(\rho)|_{0}=$ $\tau(\rho^{\prime})|_{0}$.

Assume first that there is a largest $n>0$ such that $\tau(\rho)|_{n}=$
$\tau(\rho^{\prime})|_{n}:=\theta_{0}\theta_{1}...\theta_{n}$. Then $\tau
(\rho)=\theta_{0}\theta_{1}...\theta_{n}1...$ and $\tau^{+}(\rho)=\theta
_{0}\theta_{1}...\theta_{n}0...,$ which implies
\begin{equation}
W_{\rho}^{n}(\rho)\geq\rho\text{ and }W_{\rho^{\prime}}^{n}(\rho^{\prime}%
)\leq\rho^{\prime}\text{.} \label{inequalityeq}%
\end{equation}
(We write $W=W_{\rho}$ when we want to note the dependence on $\rho$. ) We may
assume that $\tau(\rho)|_{n}$ is constant on $[\rho,\rho^{\prime}]$ for
otherwise we can restrict to a smaller interval with a strictly smaller value
of $n$. As a consequence, at every iteration, we apply the same branch $W_{0}$
or $W_{1}$ to $W_{\xi}$ to compute compute $g(\xi):=W_{\xi}^{n}(\xi)$ for all
$\xi\in\lbrack\rho,\rho^{\prime}]$. Therefore $g$ is continuous with
derivative at least $d^{n}>1,$ which contradicts (\ref{inequalityeq})$.$

The only remaining possibility is that $\tau(\rho)=\tau(\rho^{\prime})$. We
may assume that $\tau(\rho)$ is constant on $[\rho,\rho^{\prime}],$ otherwise
we can reduce the problem to the previous case. This would mean that for
arbitrarily large $n,$ the image of the interval $[\rho,\rho^{\prime}]$ under
$g$ is at an interval of size least $d^{n}(\rho^{\prime}-\rho),$ a contradiction.

Essentially the same argument, with the role of $\tau$ played by $\tau^{+}$
and the role of\ played by $W_{+}$, proves that $\tau^{+}(\rho)$ is strictly
increasing as a function of $\rho\in[1-b,a]$ to $I^{\infty}.$
\end{proof}

\begin{corollary}
\label{corollary4} The map $\rho\mapsto\tau(\rho)$ is left continuous and the
map $\rho\mapsto\tau^{+}(\rho)$ is right continuous.
\end{corollary}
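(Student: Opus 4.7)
The plan is to reduce each continuity statement to a digit-stabilization claim: since the metric on $I^\infty$ records only agreement of initial segments, $\tau$ is left-continuous at $\rho_0 \in (1-b, a]$ iff, for every sequence $\rho_n \uparrow \rho_0$ and every $N \in \mathbb{N}$, one eventually has $\tau(\rho_n)|_N = \tau(\rho_0)|_N$. By Lemma \ref{lemma3}, $\{\tau(\rho_n)\}$ is non-decreasing in $(I^\infty, \preceq)$; since a non-decreasing $\{0,1\}$-valued sequence stabilizes digit by digit, $\tau(\rho_n)$ converges in the metric to some $L \preceq \sigma := \tau(\rho_0)$, and the goal is to rule out $L \prec \sigma$.

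Suppose for contradiction $L \prec \sigma$, and let $N$ be the least index with $L_N \neq \sigma_N$; then $L_N = 0$ and $\sigma_N = 1$. For all sufficiently large $n$ one has $\tau(\rho_n)|_N = L|_N$, so $\tau(\rho_n)|_{N-1} = \sigma|_{N-1}$ and $\tau(\rho_n)_N = 0$. The agreement of the first $N$ digits forces exactly the same sequence of branches $W_{\sigma_0}, \ldots, W_{\sigma_{N-1}}$ to be applied during the first $N$ iterations at both $\rho_n$ and $\rho_0$. Setting
\[
g := W_{\sigma_{N-1}} \circ \cdots \circ W_{\sigma_0},
\]
a continuous function of one real variable that does \emph{not} depend on the dynamical-system parameter, one obtains $W_\rho^N(\rho) = g(\rho)$ for every $\rho \in \{\rho_n\} \cup \{\rho_0\}$. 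The digit constraints become $g(\rho_n) \leq \rho_n$ and $g(\rho_0) > \rho_0$. Continuity of $g(\rho) - \rho$ at $\rho_0$ then yields $g(\rho) > \rho$ on a neighborhood of $\rho_0$, contradicting $g(\rho_n) \leq \rho_n$ for all large $n$. Hence $L = \sigma$, proving left-continuity of $\tau$.

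Right-continuity of $\tau^+$ follows by a mirror-symmetric argument: take $\rho_n \downarrow \rho_0$; Lemma \ref{lemma3} renders $\tau^+(\rho_n)$ non-increasing and hence convergent to some $L \succeq \sigma := \tau^+(\rho_0)$; if $L \succ \sigma$, the first differing index gives $L_N = 1$ and $\sigma_N = 0$, the itinerary match up to $N-1$ produces the same fixed composition $g$ with $g(\rho_n) \geq \rho_n$ and $g(\rho_0) < \rho_0$, and continuity again yields the contradiction $g(\rho_n) < \rho_n$ for large $n$. The main potential obstacle is the definitional one: verifying that matching initial itineraries genuinely forces $W_\rho^N(\rho) = g(\rho)$ for a fixed $g$, which amounts to checking that each intermediate iterate lies in the half of $[0,1]$ prescribed by $\sigma_k$. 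This is immediate from the definitions of $W$, $W_+$, $\tau$, and $\tau^+$, and, notably, requires only continuity of $g$, not the stronger expansivity exploited in the proof of Lemma \ref{lemma3}.
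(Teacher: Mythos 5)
Your argument is correct, but it takes a genuinely different route from the paper's. The paper proves left continuity by approximating $\tau_{\rho_0}(\rho_0)$ from below by $\tau^{+}_{\rho_0}(x)$ for a point $x<\rho_0$ that is not a preimage of $\rho_0$ (property (7)), using property (9) to make the prefix $\tau^{+}_{\rho}(x)|_{n}$ locally constant in $\rho$, and then sandwiching $\tau_{\rho}(\rho)$ between $\tau^{+}_{\rho}(x)$ and $\tau_{\rho_0}(\rho_0)$ via Lemma \ref{lemma3} and the order-compatibility of the metric $d$. You instead work directly with the itinerary of the critical point: Lemma \ref{lemma3} gives digitwise stabilization of $\tau(\rho_n)$ along $\rho_n\uparrow\rho_0$ to a limit $L\preceq\tau(\rho_0)$, and a putative first discrepant index $N$ pins down a single branch composition $g=W_{\sigma_{N-1}}\circ\cdots\circ W_{\sigma_0}$ with $g(\rho_n)\le\rho_n$ but $g(\rho_0)>\rho_0$, which continuity of $g$ forbids. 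This is essentially the device $g(\xi)=W_{\xi}^{n}(\xi)$ that the paper uses inside its proof of Lemma \ref{lemma3}, and, as you observe, here only continuity of $g$ is needed rather than the expansivity bound $d^{n}>1$. Your version is more self-contained, bypassing properties (7) and (9) of Section \ref{basicsec}; the paper's is shorter given that toolbox. Two points deserve a sentence in a polished write-up: first, the monotonicity from Lemma \ref{lemma3} is what forces the discrepant digit to satisfy $L_N=0$, $\sigma_N=1$ --- in the opposite configuration the limit argument would only yield $g(\rho_0)=\rho_0$ and no contradiction, which is precisely why $\tau$ can jump from the right; second, $g$ is defined and continuous on a closed interval containing $[\rho_n,\rho_0]$ for large $n$ because each $W_i$ is increasing with closed interval domain and the intermediate iterates lie in the required domains, so the appeal to continuity of $g(\rho)-\rho$ near $\rho_0$ is legitimate.
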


\begin{proof}
Fix a parameter $\rho_{0}$ and let $\varepsilon>0$. Then by (7) there is
$x<\rho_{0}$ which is not a preimage of $\rho_{0}$ for any order and such
that
\[
d(\tau_{\rho_{0}}^{+}(x),\tau_{\rho_{0}}(\rho_{0}))<\frac{\varepsilon}{2}.
\]
By (9), for any $n\in\mathbb{N}$ there exists $\delta>0$ such that the prefix
$\tau_{\rho}^{+}(x)|_{n}$ is constant when $\rho\in(\rho_{0}-\delta,\rho
_{0}+\delta)$. Let $n$ be such that $2^{-n}<\varepsilon$, and let $\rho>x$ and
$\rho\in(\rho_{0}-\delta,\rho_{0})$. We have that $\tau_{\rho}^{+}(x)\prec
\tau_{\rho}^{+}(\rho)$ and
\[
d(\tau_{\rho}^{+}(x),\tau_{\rho_{0}}^{+}(x))<\frac{\varepsilon}{2}.
\]
Combining the two inequalities we obtain%
\[
d(\tau_{\rho}^{+}(x),\tau_{\rho_{0}}(\rho_{0}))<\varepsilon,
\]
and by Lemma \ref{lemma3} we also have%
\[
\tau_{\rho}^{+}(x)\prec\tau_{\rho}(\rho)\prec\tau_{\rho_{0}}(\rho_{0}).
\]
The distance $d$ has the property that if $\sigma\prec\zeta\prec\sigma
^{\prime}$ then $d(\sigma,\zeta)\leq d(\sigma,\sigma^{\prime})$ and
$d(\zeta,\sigma^{\prime})\leq d(\sigma,\sigma^{\prime})$. This shows that
$\rho\mapsto\tau(\rho)$ is left continuous. The right continuity of
$\rho\mapsto\tau^{+}(\rho)$ admits an analagous proof.
\end{proof}

As a consequence of Corollary \ref{corollary2}, Lemma \ref{lemma3} and (10),
we obtain the unicity of $\rho$ for which $\overline{\Omega}$ is symmetric.As
a consequence of Corollary \ref{corollary2}, Lemma \ref{lemma3} and (10), we
obtain the unicity of $\rho$ for which $\overline{\Omega}$ is symmetric.

\begin{corollary}
\label{corollary5} There is at most one $\rho\in\lbrack1-b,a]$ such that
$\overline{\Omega}=\overline{\Omega}^{\ast}$.
\end{corollary}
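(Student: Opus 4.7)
The plan is to argue by contradiction, leveraging the three ingredients flagged just before the statement. Suppose there exist two distinct parameters $\rho_{1},\rho_{2}\in[1-b,a]$ with $\rho_{1}<\rho_{2}$ for which $\overline{\Omega}=\overline{\Omega}^{\ast}$. By Corollary \ref{corollary2}, symmetry at a parameter $\rho$ is equivalent to the identity
\[
\tau(\rho)=\bigl(\tau^{+}(\rho)\bigr)^{\ast},
\]
so we obtain $\tau_{\rho_{1}}(\rho_{1})=(\tau^{+}_{\rho_{1}}(\rho_{1}))^{\ast}$ and $\tau_{\rho_{2}}(\rho_{2})=(\tau^{+}_{\rho_{2}}(\rho_{2}))^{\ast}$, where the subscript emphasizes dependence on the parameter.

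Next I would compare the two sides as $\rho$ varies from $\rho_{1}$ to $\rho_{2}$. By Lemma \ref{lemma3}, both $\rho\mapsto\tau(\rho)$ and $\rho\mapsto\tau^{+}(\rho)$ are strictly increasing on $[1-b,a]$, so
\[
\tau(\rho_{1})\prec\tau(\rho_{2})\qquad\text{and}\qquad \tau^{+}(\rho_{1})\prec\tau^{+}(\rho_{2}).
\]
By property (10) of Section \ref{basicsec}, the involution $^{\ast}$ is strictly decreasing, so applying it to the second inequality reverses it:
\[
\bigl(\tau^{+}(\rho_{1})\bigr)^{\ast}\succ\bigl(\tau^{+}(\rho_{2})\bigr)^{\ast}.
\]
Combining this with the symmetry identities at both parameters gives $\tau(\rho_{1})\succ\tau(\rho_{2})$, which directly contradicts the strict monotonicity of $\tau$. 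Hence at most one $\rho$ can satisfy $\overline{\Omega}=\overline{\Omega}^{\ast}$.

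There is essentially no obstacle here beyond making sure the strict inequalities are genuinely strict, which is exactly the content of Lemma \ref{lemma3} and property (10). The proof is a three-line order-theoretic sandwich, and the main work was already done in establishing those two results; Corollary \ref{corollary2} is the bridge that converts the topological symmetry condition into a single equation between points of $I^{\infty}$ to which monotonicity can be applied.
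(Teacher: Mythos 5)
Your proof is correct and follows exactly the route the paper intends: the text preceding Corollary \ref{corollary5} cites precisely Corollary \ref{corollary2}, Lemma \ref{lemma3} and property (10) as the ingredients, and your order-theoretic contradiction is the straightforward way to combine them. You have simply written out the details the paper leaves implicit.
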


\begin{proof}
[Proof of Theorem \ref{symmetrythm}.]By Lemma \ref{lemma3} and (10), we may
define%
\[
\rho_{0}:=\sup\{\rho\in\lbrack1-b,a]:\tau(\rho)\preceq\tau^{+}(\rho)^{\ast
}\}=\inf\{\rho\in\lbrack1-b,a]:\tau(\rho)^{\ast}\preceq\tau^{+}(\rho)\}.
\]
Assume $\tau(\rho_{0})\prec\tau^{+}(\rho_{0})^{\ast}$. It is straighfoward to
check $1-b<\rho_{0}<a$.

There is a largest $n\geq2$ such that $\tau(\rho_{0})|_{n}=\tau^{+}(\rho
_{0})^{\ast}|_{n}=:\eta=01...$ .

Observe that $\tau(\rho_{0})=0\tau(W_{0}(\rho_{0}))$ and $\tau^{+}(\rho
_{0})=1\tau^{+}(W_{1}(\rho_{0}))$. If neither $W_{0}(\rho_{0})$ nor
$W_{1}(\rho_{0})$ belongs to $\{0,1\}\cup\bigcup\limits_{k=0}^{n-1}W^{-k}%
(\rho_{0}),$ then by (9) both $\tau(\rho)|_{n+1}$ and $\tau^{+}(\rho)|_{n+1}$
are constant on a neighborhood of $\rho_{0}$ which contradicts the definition
of $\rho_{0}$.

Let us consider the projection $\hat{\pi}(\tau^{+}(W_{1}(\rho_{0}))^{\ast})$.
If $\hat{\pi}(\tau^{+}(W_{1}(\rho_{0}))^{\ast})>W_{0}(\rho_{0})$ then by the
continuity of $W_{0}$, of $\hat{\pi}$ by (13), of $\rho\mapsto\tau_{\rho}%
^{+}(\rho)$ (Corollary \ref{corollary4}) there is a $\rho>\rho_{0}$ such that
$\hat{\pi}_{\rho}(\tau_{\rho}^{+}(W_{1}(\rho_{0}))^{\ast})>W_{0}(\rho)$. By
(6) and (13) this implies $\tau_{\rho}(\rho)\prec\tau_{\rho}^{+}(\rho)^{\ast}%
$, which again contradicts the definition of $\rho_{0}$.

As $\hat{\pi}$ is increasing, (13) and $\tau(W_{0}(\rho_{0}))\prec\tau
^{+}(W_{1}(\rho_{0}))^{\ast},$ we have $\hat{\pi}(\tau^{+}(W_{1}(\rho
_{0}))^{\ast})=W_{0}(\rho_{0})$. Let $0<m<n$ be minimal such that $W^{m}\circ
W_{0}($ $\rho_{0})=\rho_{0}$ or $W^{m}\circ W_{1}(\rho_{0})=\rho_{0}$. We may
apply (14) $m$ times and obtain%
\begin{equation}
W^{m}\circ W_{0}(\rho_{0})=\hat{\pi}(S^{m}(\tau^{+}(W_{1}(\rho_{0}))^{\ast
}))=\hat{\pi}(\tau^{+}(W^{m}\circ W_{1}(\rho_{0}))^{\ast})\text{.}
\label{equation2}%
\end{equation}
As $\tau^{+}(\rho_{0})=1...$, if $W^{m}\circ W_{1}(\rho_{0})=\rho_{0}$ then we
have%
\[
\tau(\rho_{0})\prec\tau^{+}(\rho_{0})^{\ast}=\tau^{+}(W^{m}\circ W_{1}%
(\rho_{0}))^{\ast}\prec\tau^{+}(\rho_{0}),
\]
which by (6) and equation (\ref{equation2}) implies $W^{m}\circ W_{0}(\rho
_{0})=\rho_{0}$. Therefore $\tau(\rho_{0})=\tau^{+}(\rho_{0})^{\ast}$ as both
are periodic of period $m+1$ and have the same prefix of length $n>m$, a contradiction.

If $W^{m}\circ W_{1}(\rho_{0})\neq\rho_{0}$ then $W^{m}\circ W_{0}(\rho
_{0})=\rho_{0}$ thus by (13), (10) and equality (\ref{equation2}) we obtain%
\[
\tau(\rho_{0})\prec\tau^{+}(\rho_{0})^{\ast}\preceq\tau^{+}(W^{m}\circ
W_{1}(\rho_{0})):=\sigma^{\prime}.
\]
By (6), this means that $\rho_{0}\leq W^{m}\circ W_{1}(\rho_{0})$ so in fact%
\[
\rho_{0} < W^{m}\circ W_{1}(\rho_{0})\text{.}%
\]

As $W^{m+1}(\rho_{0})=\rho_{0},$ $\tau(\rho_{0})=$ $\kappa\kappa
\kappa...:=\kappa^{\infty}$ where $\kappa=\tau(\rho_{0})|_{m+1}=\tau^{+}
(\rho_{0})^{*}|_{m+1}$, as $m+1 \leq n$. We can write $\tau^{+}(\rho_{0})
=\kappa^{\ast}\sigma^{\prime}$ therefore $\kappa^{\ast}\sigma^{\prime}%
\prec\sigma^{\prime}$ by (6) and the previous inequality. By induction we get
$\kappa^{\ast\infty}\prec$ $\sigma^{\prime}$ so
\[
\tau^{+}(\rho_{0})^{\ast}=\kappa(\sigma^{\prime\ast})\prec\kappa^{\infty}%
=\tau(\rho_{0})\text{,}%
\]
a contradiction.

The case $\tau(\rho_{0})\succ\tau^{+}(\rho_{0})^{\ast}$ is analagous by the
symmetric definition of $\rho_{0}$, therefore $\overline{\Omega}_{\rho_{0}}$
is symmetric.
\end{proof}

\begin{proposition}
\label{proposition7} If $\overline{\Omega}=\overline{\Omega}^{\ast}$then the
map $h:[0,1]\rightarrow\lbrack0,1]$ defined by $h(x)=\hat{\pi}(\tau(x)^{*})$
is a homeomorphism and $h\circ\hat{\pi}=\hat{\pi}\circ^{\ast}$ on $I^{\infty}$.
\end{proposition}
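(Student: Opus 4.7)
The plan is to prove that $h$ is a continuous involution of $[0,1]$, which immediately gives the homeomorphism property, and to extract the intertwining relation $h\circ\hat\pi=\hat\pi\circ{}^*$ along the way. Everything reduces to one key identity, call it $(\star)$:
\[
\hat\pi(\tau(x)^*)=\hat\pi(\tau^+(x)^*)\quad\text{for every }x\in[0,1].
\]

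I would first prove $(\star)$. If $x\notin\bigcup_{k\in\mathbb{N}}W^{-k}(\rho)$ then $\tau(x)=\tau^+(x)$ by property (5) and there is nothing to do. Otherwise $\tau(x)$ and $\tau^+(x)$ share a common prefix $\eta_0\eta_1\ldots\eta_{k-1}$, after which $\tau(x)=\eta_0\cdots\eta_{k-1}\,0\alpha$ and $\tau^+(x)=\eta_0\cdots\eta_{k-1}\,1\beta$. The symmetry hypothesis gives $\alpha=\beta^*$ by Corollary \ref{corollary2}, so writing $\bar\eta_j:=1-\eta_j$ we obtain
\[
\tau^+(x)^*=\bar\eta_0\cdots\bar\eta_{k-1}\,0\alpha,\qquad \tau(x)^*=\bar\eta_0\cdots\bar\eta_{k-1}\,1\beta,
\]
both lying in $\overline\Omega=\overline\Omega^*$. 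Assume for contradiction that $y_1:=\hat\pi(\tau^+(x)^*)<\hat\pi(\tau(x)^*)=:y_2$ and pick any $y\in(y_1,y_2)$. Using properties (6), (7) and (13), one checks $\tau^+(x)^*\preceq\tau(y)\preceq\tau(x)^*$, so $\tau(y)$ has prefix $\bar\eta_0\cdots\bar\eta_{k-1}$ and $S^k(\tau(y))\in[0\alpha,1\beta]$. Since $\tau(y)\in\Omega\subset\overline\Omega$, Lemma \ref{lemma1} forces $S^k(\tau(y))\preceq 0\alpha$ when its first symbol is $0$ and $S^k(\tau(y))\succeq 1\beta$ when its first symbol is $1$; combined with the bounds already obtained, $S^k(\tau(y))\in\{0\alpha,1\beta\}$, i.e., $\tau(y)\in\{\tau^+(x)^*,\tau(x)^*\}$. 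Property (13) then gives $y\in\{y_1,y_2\}$, contradicting $y\in(y_1,y_2)$.

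With $(\star)$ in hand the rest is formal. For continuity, $h(x)=\hat\pi(\tau(x)^*)$ is left-continuous by the left-continuity of $\tau$ (property (7)) together with the continuity of $^*$ and $\hat\pi$ (properties (10) and (13)), and by $(\star)$ the same map equals $\hat\pi(\tau^+(x)^*)$, which is right-continuous. For the functional equation, property (13) gives $\tau(\hat\pi(\sigma))\preceq\sigma\preceq\tau^+(\hat\pi(\sigma))$ for every $\sigma\in I^\infty$, and applying $^*$ (order-reversing) and then the increasing $\hat\pi$ yields
\[
\hat\pi(\tau^+(\hat\pi(\sigma))^*)\leq\hat\pi(\sigma^*)\leq\hat\pi(\tau(\hat\pi(\sigma))^*);
\]
by $(\star)$ both outer terms equal $h(\hat\pi(\sigma))$, so $\hat\pi\circ{}^*=h\circ\hat\pi$. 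Finally, using this equation and the fact that $^*$ is its own inverse,
\[
h(h(x))=h(\hat\pi(\tau(x)^*))=\hat\pi((\tau(x)^*)^*)=\hat\pi(\tau(x))=x,
\]
so $h$ is a continuous involution of the compact Hausdorff space $[0,1]$, hence a homeomorphism.

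The principal obstacle is $(\star)$: identifying the ``flipped'' pair $(\tau(x)^*,\tau^+(x)^*)$ under $\hat\pi$ at each preimage $x\in\bigcup_{k\in\mathbb{N}}W^{-k}(\rho)$. This is precisely where the symmetry hypothesis $\alpha=\beta^*$ is used in an essential way, by squeezing any candidate intermediate address into the forbidden gap between $0\alpha$ and $1\beta$ ruled out by Lemma \ref{lemma1}. Once $(\star)$ is secured the remaining continuity, intertwining and involution statements follow by routine bookkeeping with properties (5)--(13).
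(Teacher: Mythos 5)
Your proof is correct, and it follows the same basic strategy as the paper: use $\alpha=\beta^{\ast}$ (Corollary \ref{corollary2}) to see what $^{\ast}$ does to the pair $(\tau(x),\tau^{+}(x))$ at preimages of $\rho$, deduce that $h$ is an involution, and then get the homeomorphism and the intertwining relation. The technical core and the order of deductions differ, though. The paper asserts directly, citing (13), the itinerary identities $\tau(h(x))=\tau^{+}(x)^{\ast}$ and $\tau^{+}(h(x))=\tau(x)^{\ast}$, reads off $h\circ h=\mathrm{id}$, obtains that $h$ is decreasing from (6), (10) and (13), and finally gets $h\circ\hat{\pi}=\hat{\pi}\circ{}^{\ast}$ by substitution. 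You instead isolate the identity $(\star)$, $\hat{\pi}(\tau(x)^{\ast})=\hat{\pi}(\tau^{+}(x)^{\ast})$, prove it by squeezing any intermediate address against Lemma \ref{lemma1}, derive the intertwining relation first and the involution as a corollary of it, and replace monotonicity by the observation that a map which is simultaneously left- and right-continuous is continuous. Your route has the merit of making fully explicit the step the paper glosses over: why $\hat{\pi}$ identifies the reflected pair $(\tau(x)^{\ast},\tau^{+}(x)^{\ast})$ to a single point, which is exactly where the hypothesis $\overline{\Omega}=\overline{\Omega}^{\ast}$ enters (your appeal to Lemma \ref{lemma1} supplies the admissibility of the reflected prefix that the paper's one-line ``by (13)'' leaves implicit). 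The paper's version is shorter and yields the monotonicity of $h$ as a byproduct, which your argument does not record but also does not need.
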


\begin{proof}
First by Corollary \ref{corollary2}, we have $\tau(\rho)=\tau^{+}(\rho)^{\ast
}$ and points $x$ for which $\tau(x)\neq\tau^{+}(x)$ are exactly preimages of
$\rho$. In this case, there is $n\geq0$ such that $\tau(x)$ and $\tau^{+}(x)$
have the same initial prefix $\kappa:=\tau(x)|_{n}=\tau^{+}(x)|_{n},$ and
$\tau(x)=$ $\kappa\tau(\rho),\tau^{+}(x)=$ $\kappa\tau^{+}(\rho)$. Therefore,
by (13), for all $x\in\lbrack0,1],$ we have
\[
\tau(h(x))=\tau^{+}(x)^{\ast}\text{ and }\tau^{+}(h(x))=\tau(x)^{\ast}\text{,}%
\]
thus $h\circ h(x)=x$. By (6), (10) and (13), $h$ is also decreasing. Therefore
$h:[0,1]\rightarrow$ $[0,1]$ is a homeomorphism.

It is readily verified, by direct substitution of $x=h\circ\hat{\pi}(\sigma)$
into $h(x)=\hat{\pi}(\tau(x)^{\ast})$, that $h\circ\hat{\pi}\left(
\sigma\right)  =\hat{\pi}(\sigma^{\ast})$ for all $\sigma\in I^{\infty}$.
\end{proof}

\section{Iterated Closed Relations and Conley Decomposition for Itineraries of
$W$}

Theorem \ref{irtheorem} follows from Proposition \ref{omegaprop}, but some
extra language is needed. In explaining this language we describe the
Conley-McGehee-Wiandt decomposition theorem, \cite[Theorem 13.1]{mcgehee}.

For $X$ a compact Hausdorff space, let $2^{X}$ be the subsets of $X$. A
\textit{relation} $r$ on $X$ is simply a subset of $X\times X.$ A relation $r$
on $X$ is called a \textit{closed relation} if $r$ is a closed of $X\times X.$
For example the set $r\subset I^{\infty}\times I^{\infty}$ defined in Theorem
\ref{irtheorem}, namely
\[
r=\{(0\sigma,\sigma)\in I^{\infty}\times I^{\infty}:\sigma\preceq\alpha
\}\cup\{(1\sigma,\sigma)\in I^{\infty}\times I^{\infty}:\beta\preceq\sigma\},
\]
is a closed relation. Following \cite{mcgehee}, a relation $r\in2^{X}$
provides a mapping $r:2^{X}\rightarrow2^{X}$ defined by%
\[
r(C)=\{y\in X:(x,y)\in r\text{ for some }x\in C\}\text{.}%
\]
Notice that the image of a nonempty set may be empty. Iterated relations are
defined by $r^{0}=X\times X$ and, for all $k\in\mathbb{N}$,%
\[
r^{k+1}=r\circ r^{k}=\{(x,z):(x,y)\in r,(y,z)\in r^{k}\text{ \textit{for some
}}y\in X\}.
\]

The \textit{omega limit set} of $C\subset X$ under a closed relation $r\subset
X\times X$ is%
\[
\omega(C)=\cap\mathfrak{K}(C)
\]
where
\[
\mathfrak{K}(C)=\{D\text{ \textit{is a closed subset of }}X:r(D)\cup
r^{n}(C)\subset D\text{ \textit{for some} }n\in\mathbb{N}\}\text{.}%
\]
By definition, an \textit{attractor} of a closed relation $r$ is a closed set
$A$ such that the following two conditions hold:

(i) $r(A)=A$;

(ii) there is a closed neighborhood $\overline{\mathcal{N}}(A)$ of $A$ such
that $\omega(C)\subset A$ for all $C\subset\overline{\mathcal{N}}(A)$.

The basin $\mathcal{B}(A)$ of an attractor $A$ for a closed relation $r$ on a
compact Hausdorff space $X$ is the union of all open sets $O\subset X$ such
that $\omega(C)\subset A$ for all $C\subset O.$

Given an attractor $A$ for a closed relation $r$ on a compact Hausdorff space
$X$, there exists a corresponding \textit{attractor block}, namely a closed
set $E\subset X$ such that $E$ contains both $A$ and $r(E)$ in its interior,
and $A=\omega(E)$. Also, there exists a unique \textit{dual repeller}
$A^{\ast}=X\backslash\mathcal{B}(A).$ This repeller is an attractor for the
transpose relation $r^{\ast}=\{(y,x):(x,y)\in r\}$. The set of connecting
orbits associated with the attractor/repeller pair $A,$ $A^{\ast}$ is
$\mathcal{C}(A)=X\backslash(A\cup A^{\ast})$.

If $r$ is a closed relation on a compact Hausdorff space $X$, then $x\in X$ is
called \textit{chain-recurrent} for $r$ if for every closed neighborhood $f$
of $r$, $x$ is periodic for $f$ (i.e. there exists a finite sequence of points
$\{x_{n}\}_{n=0}^{p-1}\subset X$ such that $x_{0}=x,$ $(x_{p-1},x_{0})\in f$
and $(x_{n-1},x_{n})\in f$ for $n=1,2,...,p-1$). The chain recurrent set
$\mathcal{R}$ for $r$ is the union of all the points that are chain recurrent
for $r$. A transitive component of $\mathcal{R}$ is a member of the
equivalence class on $\mathcal{R}$ defined by $x\sim y$ when for every closed
neighborhood $f$ of $r$ there is an orbit from $x$ to $y$ under $f$ (i.e.
there exists a finite sequence of points $\{x_{n}\}_{n=0}^{p-1}\subset
\mathcal{R}$ such that $x_{0}=x,$ $x_{p-1}=y,$ and $(x_{n},x_{n+1})\in f$ for
all $n\in\{0,1,...p-1\}$.)

\begin{theorem}
[Conley-McGehee-Wiandt]If $r$ is a closed relation on a compact Hausdorff
space $X,$ then
\[
\mathcal{R}=\bigcup\limits_{A\in\mathcal{U}}\mathcal{C}(A)
\]
where $\mathcal{R}$ is the chain-recurrent set and $\mathcal{U}$ is the set of attractors.
\end{theorem}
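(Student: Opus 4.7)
The plan is to prove the equality by the Conley–McGehee attractor-block machinery together with an approximation argument replacing $r$ by arbitrary closed neighborhoods $f \supset r$ in $X \times X$. The proof organizes itself into three ingredients: a dictionary between attractors and attractor blocks, a recasting of chain recurrence in the same vocabulary, and a direct comparison of the two resulting decompositions of $X$.

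First I would establish the attractor-block dictionary. A closed set $E \subset X$ is an \emph{attractor block} for a closed relation $g$ on $X$ if $g(E) \subset \mathrm{int}(E)$; from such a block one extracts an attractor $A_g(E) = \bigcap_{n \ge 0} g^n(E)$ with basin containing $E$, and, conversely, every attractor of $g$ admits a fundamental system of attractor blocks by compactness of $X$ together with the identity $g(A) = A$ from the definition. Applied with $g = r$ this parametrizes the set $\mathcal{U}$; applied with $g = f \supset r$ a closed neighborhood of $r$ in $X \times X$, it produces a refined family of attractors that approximate those of $r$ as $f$ shrinks to $r$.

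Next I would recast chain recurrence in the same language. Unwinding the definition, $x \notin \mathcal{R}$ holds precisely when some closed neighborhood $f$ of $r$ admits no closed $f$-orbit through $x$; the key lemma in this setup is that this occurs if and only if there is an attractor block $E$ for $f$ with $x$ separated from the corresponding attractor by the attractor/repeller dichotomy, i.e.\ $x \in \mathcal{C}(A_f(E)) = X \setminus (A_f(E) \cup A_f(E)^*)$. Intersecting over a cofinal system of neighborhoods $f \supset r$ then promotes $A_f(E)$ to an honest attractor $A$ of $r$ still carrying $x \in \mathcal{C}(A)$; compactness of $X$ is essential here, to ensure that decreasing intersections of attractor blocks remain attractor blocks. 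Comparing the two descriptions of the partition of $X$ induced by $\mathcal{R}$ and by the family $\{(A, A^*) : A \in \mathcal{U}\}$ then yields the stated equality.

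The main obstacle is the constructive step: from a failure of chain recurrence at $x$, produce an explicit attractor of $r$ whose connecting set contains $x$. The standard route is to form $E$ as the closure of the forward $f$-orbit of a small $f$-neighborhood of $r(x)$, verify $f(E) \subset \mathrm{int}(E)$ using the absence of an $f$-chain through $x$, and then pass from the attractor $A_f(E)$ of $f$ to an attractor of $r$ by the intersection argument above. I would follow the outline of \cite[Theorem 13.1]{mcgehee}, to which the result is attributed; the argument uses only the compact Hausdorff structure on $X$ and not any specific features of our $X = I^\infty$, and is applied in the present paper simply to package Proposition \ref{omegaprop} into the attractor language of Theorem \ref{irtheorem}.
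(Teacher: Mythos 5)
The paper offers no proof of this statement at all: it is quoted as background from \cite[Theorem 13.1]{mcgehee} and used as a black box, so there is no internal argument to measure yours against. At the level of a road map, your outline (attractor blocks, approximation of $r$ by closed neighborhoods $f\supset r$, recasting non-chain-recurrence via the attractor/repeller dichotomy, and an intersection argument to pass from attractors of $f$ to attractors of $r$) is the standard McGehee--Wiandt strategy and is aimed at the right theorem.

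However, there is a genuine gap at the very last step, and it is a gap of content, not of detail. Your key lemma says that $x\notin\mathcal{R}$ holds if and only if $x\in\mathcal{C}(A)$ for some attractor $A$. Taken at face value, this proves
\[
X\setminus\mathcal{R}=\bigcup_{A\in\mathcal{U}}\mathcal{C}(A),
\qquad\text{equivalently}\qquad
\mathcal{R}=\bigcap_{A\in\mathcal{U}}\bigl(A\cup A^{\ast}\bigr),
\]
which is the Conley decomposition theorem as actually proved in \cite{mcgehee}. That is the \emph{complement} of the equality displayed in the statement, so your closing claim that ``comparing the two descriptions \ldots yields the stated equality'' is a non sequitur: what your argument establishes is inconsistent with the printed identity $\mathcal{R}=\bigcup_{A}\mathcal{C}(A)$ except when $X=\emptyset$. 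Indeed $\mathcal{C}(A)=X\setminus(A\cup A^{\ast})$ consists by construction of points that are \emph{not} chain recurrent (they lie in $\mathcal{B}(A)\setminus A$ and leave a suitable neighborhood of themselves permanently under some closed $f\supset r$), so the union on the right can never meet $\mathcal{R}$. The statement in the paper contains a typo ($\mathcal{R}$ should be $X\setminus\mathcal{R}$), and you should flag this rather than assert that your lemma delivers it. Two smaller points: the formula $A_g(E)=\bigcap_{n\ge 0}g^{n}(E)$ requires $g(E)\subset E$ for the intersection to be nested (true for an attractor block, but it should be said), and since the image of a nonempty set under a relation may be empty, the construction of $E$ as a forward orbit closure needs a word on why the resulting attractor is the one whose connecting set contains $x$.
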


\begin{proof}
[\textbf{Proof of Theorem \ref{irtheorem}}]This follows at once from
Proposition \ref{omegaprop} together with the definitions associated with the
theory of iterated closed relations as summarized above, but see
\cite{mcgehee}.
\end{proof}

We note the following. $\overline{\Omega}$ can be embedded in $[0,1]\subset
\mathbb{R}$ using the (continuous and surjective) coding map $\pi:I^{\infty
}\rightarrow\lbrack0,1]$ associated with the iterated function system
$([0,1];x\mapsto x/2,x\mapsto(1+x)/2).$ This coding map $\pi$ is defined for
all $\sigma$%
\[
\pi(\sigma)=\sum\limits_{k\in\mathbb{N}}\frac{\sigma_{k}}{2^{k+1}}\text{.}%
\]
$\pi$ provides a homeomorphism between $\overline{\Omega}$ and $\pi
(\overline{\Omega})$. The point $\sigma\in\overline{\Omega}$ is uniquely and
unambiguously represented by the binary real number $0.\sigma$. In the
representation provided by $\pi,$ the map $\overline{\Psi}:2^{I^{\infty}%
}\rightarrow2^{I^{\infty}}$ becomes the action of the iterated closed relation
$\widetilde{r}\subset\lbrack0,1]\times\lbrack0,1]\subset\mathbb{R}^{2}$
defined by%
\[
\widetilde{r}:=\{(x,x/2):x\in\lbrack0,\pi(\alpha)]\}\cup\{(x,(x+1)/2):x\in
\lbrack\pi(\beta),1]\}
\]
on subsets of $[0,1]$. It follows from Proposition \ref{omegaprop} (iii)\ that
$\pi(\overline{\Omega})$ is the maximal attractor, as defined in
\cite{mcgehee}, of $\widetilde{r}$. The corresponding dual repeller is the
empty set. The chain recurrent set of $\widetilde{r}$ is $\{0,1\}\cup
(\pi(\overline{\Omega})\cap(\pi(\beta),\pi(\alpha)))$, the transitive
components are $\{0\},$ $\{1\}$ and $\pi(\overline{\Omega})\cap(\pi(\beta
),\pi(\alpha))$, and the points $\{0\}$ and $\{1\}$ are repulsive fixed points
and comprise the boundary of the basin of the attractor $\pi(\overline{\Omega
})\cap(\pi(\beta),\pi(\alpha)),$ which contains no invariant subsets.

\end{document}